\documentclass[11pt]{article}
\usepackage[hypertexnames=false]{hyperref}
% https://tex.stackexchange.com/questions/6098/wrong-hyper-references-after-resetting-chapter-counter
\usepackage{amsfonts, amsmath, amssymb, amsthm, mathtools, bm, tikz, enumitem, etoolbox}

\usepackage[capitalize, noabbrev]{cleveref}
\crefname{equation}{}{}

\usepackage{fullpage}
\usepackage{microtype}
\linespread{1.2}

\usetikzlibrary{decorations.pathreplacing}
\tikzset{
  vertex/.style = {circle, draw, fill=white, inner sep=0pt, minimum width=4pt},
  root/.style = {circle, draw, fill=black, inner sep=0pt, minimum width=4pt},
  % The following tikzstyle from https://tex.stackexchange.com/questions/20885/draw-curly-braces-in-tikz draws curly braces in tikz.
  bracket/.style = {decorate,decoration={brace,amplitude=5pt},xshift=0pt,yshift=-10pt},
  bracket-r/.style = {decorate,decoration={brace,amplitude=5pt},xshift=10pt,yshift=0pt},
  bracket-u/.style = {decorate,decoration={brace,amplitude=5pt},xshift=0pt,yshift=10pt},
}

\title{Negligible obstructions and Tur\'an exponents}
\author{
  Tao Jiang\thanks{Department of Mathematics, Miami Univeristy, Oxford, OH 45056, USA. Email: {\tt jiangt@miamioh.edu}. Supported in part by U.S. taxpayers through the National Science Foundation (NSF) grant DMS-1855542.}
  \and Zilin Jiang\thanks{School of Mathematical and Statistical Sciences, and School of Computing and Augmented Intelligence, Arizona State University, Tempe, AZ 85281. Email: {\tt zilinj@asu.edu}. The work was done when Z.~Jiang was an Applied Mathematics Instructor at Massachusetts Institute of Technology, and was supported in part by an AMS Simons Travel Grant, and by U.S. taxpayers through NSF grant DMS-1953946.}
  \and Jie Ma\thanks{School of Mathematical Sciences, University of Science and Technology of China, Hefei 230026, P.R.~China. Email: {\tt jiema@ustc.edu.cn}. Supported in part by the National Key R\&D Program of China 2020YFA0713100, National Natural Science Foundation of China grants 11622110 and 12125106, and Anhui Initiative in Quantum Information Technologies grant AHY150200.}}
\date{}

\newtheorem{theorem}{Theorem}
\newtheorem{lemma}[theorem]{Lemma}
\newtheorem{proposition}[theorem]{Proposition}
\newtheorem{corollary}[theorem]{Corollary}
\newtheorem{conjecture}[theorem]{Conjecture}

\theoremstyle{definition}
\newtheorem{definition}[theorem]{Definition}

\theoremstyle{remark}
\newtheorem*{remark}{Remark}

\newtheorem{claim}{Claim}
\newtheorem*{claim*}{Claim}
\Crefname{claim}{Claim}{Claims}

\newenvironment{claimproof}[1][Proof of Claim]{\begin{proof}[#1]}{\end{proof}}
% https://tex.stackexchange.com/questions/192096/different-proofs-in-amsmath-with-different-qed-symbol

\DeclarePairedDelimiter\abs{\lvert}{\rvert}%
\DeclarePairedDelimiter\set{\{}{\}}%
\DeclarePairedDelimiter\floor{\lfloor}{\rfloor}%

\DeclareMathOperator{\ex}{ex}
\DeclareMathOperator{\inj}{inj}
\DeclareMathOperator{\Inj}{Inj}
\DeclareMathOperator{\amp}{amp}
\DeclareMathOperator{\Amp}{Amp}
\DeclareMathOperator{\ext}{ext}
\DeclareMathOperator{\Ext}{Ext}
\DeclareMathOperator{\EE}{E}

\newcommand{\F}{\mathcal{F}}

\newcommand{\U}{\mathcal{U}}
\newcommand{\N}{\mathbb{N}}

\newcommand{\eps}{\varepsilon}

\newcommand{\dset}[2]{\set{#1 \colon #2}}

% Shorthand
\newcommand{\absng}{\abs{N_G(U_0)}}
\newcommand{\babsng}{\binom{\absng}{s+1}}
\newcommand{\us}{U_\sigma}

\newcommand{\hs}{H_\sigma}
\newcommand{\wist}{\widetilde{I}_\sigma^\times}
\newcommand{\wths}{\widetilde{H}_\sigma}
\newcommand{\wtus}{\widetilde{U}_\sigma}
\newcommand{\wtws}{\widetilde{W}_\sigma}
\newcommand{\wtas}{\widetilde{A}_\sigma}

\begin{document}

\maketitle

\begin{abstract}
  We show that for every rational number $r \in (1,2)$ of the form $2 - a/b$, where $a, b \in \mathbb{N}^+$ satisfy $\lfloor b/a \rfloor^3 \le a \le b / (\lfloor b/a \rfloor +1) + 1$, there exists a graph $F_r$ such that the Tur\'an number $\operatorname{ex}(n, F_r) = \Theta(n^r)$. Our result in particular generates infinitely many new Tur\'an exponents. As a byproduct, we formulate a framework that is taking shape in recent work on the Bukh--Conlon conjecture.
\end{abstract}

\noindent\textbf{Keywords:} Extremal graph theory; Tur\'an exponents; Bipartite graphs

\noindent\textbf{Mathematics Subject Classification:} 05C35

\section{Introduction} \label{sec:intro}

Given a family $\F$ of graphs, the Tur\'an number $\ex(n, \F)$ is defined to be the maximum number of edges in a graph on $n$ vertices that contains no graph from the family $\F$ as a subgraph. The classical Erd\H{o}s--Stone--Simonovits theorem shows that arguably the most interesting problems about Tur\'an numbers, known as the degenerate extremal graph problems, are to determine the order of magnitude of $\ex(n, \F)$ when $\F$ contains a bipartite graph. The following conjecture attributed to Erd\H{o}s and Simonovits is central to Degenerate Extremal Graph Theory (see \cite[Conjecture~1.6]{FS13}).

\begin{conjecture}[Rational Exponents Conjecture]
  For every finite family $\F$ of graphs, if $\F$ contains a bipartite graph, then there exists a rational $r \in [1,2)$ and a positive constant $c$ such that $\ex(n, \F) = cn^r + o(n^r)$.
\end{conjecture}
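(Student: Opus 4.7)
The Rational Exponents Conjecture is famously open, but the natural attack divides into three tasks: identify the candidate exponent $r = r(\F)$, prove matching upper and lower bounds $\ex(n,\F) = \Theta(n^r)$, and then upgrade $\Theta$ to a true asymptotic $cn^r(1+o(1))$. Since $\F$ contains a bipartite graph $F \subseteq K_{s,t}$, the K\H{o}v\'ari--S\'os--Tur\'an theorem already gives $\ex(n,\F) \le \ex(n,F) = O(n^{2-1/s})$, so rational upper-bound exponents are cheap. The actual $r$ should be a combinatorial invariant of $\F$ --- a balanced-density-type parameter attached to some rooted tree structure covering the bipartite members of $\F$ --- and extracting the tight upper bound requires a local Zarankiewicz-type counting argument applied to subconfigurations rather than to $F$ as a whole.

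Second, I would attack the matching lower bound through the construction programme now in force. The plan is to realise $r$ as the balanced density of a rooted tree $T$ and then feed $T$ into a Bukh--Conlon-style random algebraic construction, producing an $\F$-free graph on $n$ vertices with $\Omega(n^r)$ edges. The \emph{negligible obstructions} framework flagged in the abstract presumably isolates a short list of problematic subgraphs that a generic algebraic host already avoids, so that light random deletions preserve both the edge count and the $\F$-freeness. The arithmetic restriction $\lfloor a/b\rfloor^3 \le a \le b/(\lfloor b/a\rfloor + 1) + 1$ that appears in this paper's main theorem is, I expect, precisely the feasibility constraint for the rooted tree realising such an $r$; pushing the admissible set out to all of $[1,2)\cap\Q$ is the present frontier.

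Third, and this is where I expect the main obstacle to sit, upgrading $\Theta(n^r)$ to $cn^r(1+o(1))$ requires the limit $\lim_{n\to\infty} n^{-r}\ex(n,\F)$ to exist. Even for the single forbidden graph $K_{2,2}$ with $r = 3/2$, the existence of this limit is a longstanding open problem, so any full proof of the conjecture would implicitly settle the asymptotics of Zarankiewicz numbers. The natural route is a stability plus supersaturation argument: show that every near-extremal $\F$-free graph is structurally close to a canonical model --- a blow-up of the algebraic host built above --- and then deduce sub- or super-multiplicativity along a density sequence to force convergence. I see no way to execute this in the generality demanded by the conjecture with current techniques, and I would expect a genuine resolution to require a new averaging or ergodic input beyond the constructive framework summarised above.
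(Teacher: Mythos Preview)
The paper does not prove this statement: it is presented there as an open conjecture attributed to Erd\H{o}s and Simonovits, and no proof is offered or claimed. So there is nothing to compare your proposal against, and your own text correctly concedes that the conjecture is open and that you cannot complete the third step. In that sense your ``proof proposal'' is an honest non-proof, and the gap is the entire conjecture.

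That said, your sketch contains a concrete misreading of the paper's framework that is worth flagging. The \emph{negligible obstructions} machinery is not a tool for the lower-bound construction; it is used on the \emph{upper-bound} side. The lower bound $\ex(n,F^p) = \Omega(n^{2-1/\rho_F})$ for powers of balanced rooted trees comes directly from the Bukh--Conlon random algebraic construction (\cref{lem:bc-lb}) and requires no obstruction analysis. The negligibility lemma (\cref{lem:master}) instead says: if you can certify that a small family of subtrees of $F$ is ``negligible'' in the sense of \cref{def:neg}, then $\ex(n,F^p) = O(n^{2-1/\rho_F})$. So the obstructions are configurations whose ample embeddings would block the pigeonhole argument for finding $F^p$ inside a dense host $G$, not configurations to be deleted from an algebraic construction.

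There is also a slight conflation in your outline between the Rational Exponents Conjecture (given $\F$, the exponent exists and is rational) and the Realizability Conjecture (\cref{conj:main}: given a rational $r$, some $F_r$ realises it). The paper's contribution is entirely toward the latter, via the Bukh--Conlon programme; it says nothing about extracting a rational exponent from an arbitrary bipartite family, and the arithmetic constraint you quote (with $\lfloor b/a\rfloor$, not $\lfloor a/b\rfloor$) governs which rationals the paper can realise, not which families admit a rational exponent.
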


Recently Bukh and Conlon made a breakthrough on the inverse problem \cite[Conjecture 2.37]{FS13}.

\begin{theorem}[Bukh and Conlon~\cite{BC18}] \label{thm:bc-main}
  For every rational number $r\in (1,2)$, there exists a finite family of graphs $\F_r$ such that $\ex(n, \F_r) = \Theta(n^r)$.
\end{theorem}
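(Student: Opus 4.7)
The plan is to construct, for each rational $r = 2 - a/b \in (1,2)$ with $a,b \in \N^+$ and $a<b$, a finite family $\F_r$ that is rich enough for the upper bound $\ex(n,\F_r) = O(n^r)$ yet thin enough for a matching lower bound. I would build $\F_r$ around a single rooted tree $T = T_{a,b}$ whose rooted subtrees have ``leaf-to-edge'' ratios calibrated exactly to $a/b$, together with a finite collection of ``blow-ups'' of $T$ obtained by identifying vertices of $T$ lying at the same depth from the root.

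For the lower bound I would invoke Bukh's random algebraic method. Pick a prime $p$ with $p \asymp n^{1/b}$, take two disjoint copies of $\mathbb{F}_p^b$ as vertex classes, and sample $a$ independent random polynomials $f_1,\dots,f_a \in \mathbb{F}_p[x_1,\dots,x_b,y_1,\dots,y_b]$ of bounded degree. Declare $(x,y)$ an edge iff $f_1(x,y) = \cdots = f_a(x,y) = 0$. A typical pair becomes an edge with probability $\sim p^{-a}$, giving $\Theta(p^{2b-a}) = \Theta(n^r)$ edges in expectation. Using Lang--Weil estimates on the number of $\mathbb{F}_p$-points of algebraic varieties, I would bound the expected count of copies of each $H \in \F_r$ by a small fraction of the edge count; a standard deletion argument then furnishes an $\F_r$-free subgraph with $\Omega(n^r)$ edges.

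For the upper bound I would show that any $n$-vertex $\F_r$-free graph $G$ with $Cn^r$ edges, $C$ large, must contain a copy of $T$. After a cleaning step to pass to a near-regular subgraph of comparable density, I would attempt to embed $T$ greedily in breadth-first order. At each extension step the common neighbourhood of the already-placed ``parents'' has to be large; if not, a pigeonhole / dependent-random-choice argument locates a large vertex subset sharing many common neighbours, which assembles into a blow-up of $T$. Since every such identification of $T$ has bounded size, only finitely many obstructions arise, and placing all of them in $\F_r$ at the outset makes the greedy extension always succeed.

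The central difficulty is calibrating $T$ and its admissible identifications so that both halves close: too few forbidden graphs and the greedy embedding stalls on an unrecognised obstruction; too many and the random algebraic graph already contains some $H \in \F_r$ with positive probability. Lang--Weil rigidly dictates the admissible subtree-density condition on $T$, while the supersaturation step in the upper bound dictates which identifications must be included. A secondary hurdle is obtaining polynomial-counting estimates that are \emph{uniform} over all $H \in \F_r$; generic random $f_i$ may not suffice, and I would expect to replace them by a structured variant (e.g.\ with a prescribed symmetric part). This interplay between the algebraic geometry on the lower-bound side and the entropy-compression style embedding on the upper-bound side is where I expect the bulk of the technical work to lie.
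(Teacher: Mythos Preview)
This theorem is not proved in the present paper; it is quoted from Bukh and Conlon~\cite{BC18}. So there is no in-paper proof to compare against, and the assessment below is against the original argument.

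Your lower-bound outline via random polynomial graphs and Lang--Weil is on target and matches Bukh--Conlon.

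The upper-bound plan, however, has a structural gap. You aim to embed the rooted tree $T$ itself by a greedy breadth-first procedure, treating failures as producing ``blow-ups'' that you then place in $\F_r$. But embedding a fixed tree $T$ in a graph of average degree $\Omega(v(T))$ is trivial, and if $T$ (or any tree) sat in $\F_r$ then $\ex(n,\F_r)$ would be $O(n)$, not $\Theta(n^r)$ with $r>1$. The correct target is not $T$ but its $p$-th \emph{power} $T^p$ --- $p$ copies of $T$ glued along their roots --- together with all graphs that arise as the union of $p$ distinct embeddings of $T$ sharing a common root-image. Once $\F_r$ is set up this way, the upper bound is a short pigeonhole: in a near-regular $G$ with $d = cn^{1-1/\rho_T}$ one has $\inj(T,G) \ge (1-o(1))nd^{e(T)} = (1-o(1))c^{e(T)}n^{\abs{R(T)}}$, so for $c$ large some root-image supports at least $p$ distinct embeddings, whose union lies in $\F_r$. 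No dependent random choice or ``entropy-compression style embedding'' is involved; the obstruction analysis you sketch is relevant only to the much harder single-graph problem of collapsing $\F_r$ to $\{T^p\}$ (which is what the present paper addresses), not to \cref{thm:bc-main}. You also never isolate the \emph{balanced} condition on $T$ (\cref{def:rooted}), which is exactly what makes the Lang--Weil deletion step in the lower bound succeed: without it the random algebraic graph need not have a bounded number of $T$-extensions over every root-image, and the lower-bound half collapses.
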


Motivated by another outstanding problem of Erd\H{o}s and Simonovits (see \cite[Section III]{E81} and \cite[Problem 8]{E88}), subsequent work has been focused on the following conjecture, which aims to narrow the family $\F_r$ in \cref{thm:bc-main} down to a single graph.

\begin{conjecture}[Realizability of Rational Exponents] \label{conj:main}
  For every rational number $r\in (1,2)$, there exists a bipartite graph $F_r$ such that $\ex(n, F_r) = \Theta(n^r)$.\footnote{Erd\H{o}s and Simonovits asked a much stronger question: for every rational number $r\in (1,2)$, find a bipartite graph $F_r$ such that $\ex(n, F_r) = cn^r + o(n^r)$ for some positive constant $c$.}
\end{conjecture}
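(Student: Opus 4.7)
The plan is to build on the Bukh--Conlon finite family $\mathcal{F}_r$ produced by \cref{thm:bc-main} and compress it into a single bipartite graph $F_r$. Fix a rational $r \in (1,2)$, and write $r = 2 - a/b$ in lowest terms. The first step is to construct $F_r$ as a \emph{balanced rooted bipartite graph}: a rooted pair $(H, U)$ such that the rooted density $\rho(H, U) := e(H) / (v(H) - \abs{U})$ equals $r$ and, crucially, every rooted subpair $(H', U')$ with $U \subseteq U' \subseteq V(H')$ satisfies $\rho(H', U') \le r$. I would build $H$ iteratively by alternating a ``path subdivision'' step (which tunes $\rho$ downward) with a ``fan/blowup'' step (which tunes $\rho$ upward), guided by the continued-fraction expansion of $a/b$ so that the target $r$ is hit exactly. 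The graph $F_r$ itself would then be obtained by removing $U$ from a tensor product of two oppositely rooted copies of $(H, U)$, which forces any embedding into a host graph to produce a canonical rooted copy of $(H, U)$.

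The second step is the lower bound $\ex(n, F_r) = \Omega(n^r)$. Here I would feed $F_r$ into the random algebraic construction of Bukh: work over a finite field $\mathbb{F}_q$, define edges by random polynomial constraints on $\mathbb{F}_q^t \times \mathbb{F}_q^s$ with $s,t$ chosen so that the expected edge count is $\Theta(n^r)$ while the expected number of copies of $F_r$ is $o(n^r)$. The balance condition on $(H,U)$ is exactly what is needed to show that the variety cutting out a potential embedding has the expected dimension, so that copies of $F_r$ are polynomially rare and a deletion argument yields an $F_r$-free graph of the desired density.

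The third and hardest step is the matching upper bound $\ex(n, F_r) = O(n^r)$. The strategy is a supersaturation and homomorphism-counting argument: given a host graph with $C n^r$ edges, count rooted homomorphisms from $(H, U)$ using convexity (Jensen on the codegree function of $s$-tuples), and show that among the many rooted homomorphisms there must exist one whose image realizes the full tensor structure of $F_r$. The passage from homomorphism to injective embedding relies on what the paper calls a ``negligible obstructions'' lemma: every strict quotient $(H/\pi, U/\pi)$ should have rooted density strictly smaller than $r$, so degenerate homomorphisms contribute an asymptotically vanishing fraction of the total count.

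The main obstacle I anticipate is precisely this negligibility condition across \emph{all} rationals $r \in (1,2)$, rather than only those satisfying the arithmetic inequality featured in the paper's main theorem. The rooted-tree constructions available via path subdivisions and fans admit strict quotients whose rooted density matches $r$ when $a/b$ has certain ``bad'' continued fraction behaviour (typically when $r$ is very close to $1$ or very close to $2$); these quotients swamp the honest embedding count and break the upper bound. Overcoming this would likely require enlarging the class of balanced building blocks beyond trees, for example by inserting small algebraic bricks (projective norm graphs, Bukh random algebraic bipartite graphs, or bipartite incidence graphs of designs) as rooted gadgets whose own strict rooted quotients are automatically of smaller density, and then verifying that a gluing of such gadgets inherits both the global density $r$ and the quotient-strictness needed for negligibility.
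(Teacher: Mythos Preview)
The statement you are attempting to prove is \cref{conj:main}, which is an \emph{open conjecture}; the paper does not prove it. The paper's contribution is \cref{cor:main}, which verifies the conjecture only for those rationals $r = 2 - a/b$ satisfying the arithmetic constraint \cref{eqn:fractions-2}. There is therefore no ``paper's own proof'' to compare against.

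Your proposal is a research outline, not a proof, and you correctly identify where it breaks. The decisive gap is step three. Reducing the upper bound $\ex(n, F_r) = O(n^r)$ to a negligibility statement about degenerate homomorphisms is exactly what the paper's framework (\cref{lem:master}) does, and establishing that negligibility for \emph{every} balanced rooted tree is precisely \cref{conj:bc}, which is open. The paper can only certify negligibility for the specific trees $T_{s,t,s'}$ under the hypothesis $t \ge s^3 - 1$ (when $s - s' \ge 2$); your proposal offers no mechanism to go beyond this. The closing suggestion to insert ``algebraic bricks'' such as norm graphs or incidence graphs of designs as rooted gadgets is speculative: no one knows how to verify the strict-quotient condition for such gadgets in general, and even defining an obstruction family for non-tree $F$ (where \cref{def:obstruction} relies on subtrees) would require new ideas.

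There are also smaller issues upstream. In step one, the target density should be $\rho = b/a$, not $r$ itself, since $r = 2 - 1/\rho$. More seriously, the ``tensor product of two oppositely rooted copies with $U$ removed'' is not a recognised construction, and it is unclear why an embedding of this object into a host graph would force a canonical rooted copy of $(H,U)$; the paper (and all prior work) instead uses the $p$th power $F^p$ of a rooted tree and relies on \cref{lem:bc-lb} for the lower bound. In step two, the Bukh random algebraic construction gives $\Omega(n^{2-1/\rho})$ for some sufficiently large power $F^p$, not for an arbitrary single graph built by your tensor recipe, so the lower bound is not automatic for your $F_r$ either.
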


It is believed that the graph $F_r$ in \cref{conj:main} could be taken from a specific yet rich family of graphs, for which we give the following definitions.

\begin{definition}
  A \emph{rooted graph} is a graph $F$ equipped with a subset $R(F)$ of vertices, which we refer to as \emph{roots}. We define the $p$th \emph{power} of $F$, denoted $F^p$, by taking the disjoint union of $p$ copies of $F$, and then identifying each root in $R(F)$, reducing multiple edges (if any) between the roots.
\end{definition}

\begin{definition} \label{def:rooted}
  Given a rooted graph $F$, we define the \emph{density} $\rho_F$ of $F$ to be $e(F)/(v(F)-\abs{R(F)})$, where $v(F)$ and $e(F)$ denote the number of vertices and respectively edges of $F$. We say that a rooted graph $F$ is \emph{balanced} if $\rho_F > 1$, and for every subset $S$ of $V(F) \setminus R(F)$, the number of edges in $F$ with at least one endpoint in $S$ is at least $\rho_F\abs{S}$ .
\end{definition}

Indeed the next result on Tur\'an numbers, which follows immediately from \cite[Lemma~1.2]{BC18}, establishes the lower bound in \cref{conj:main} for some power of a balanced rooted tree.\footnote{A rooted tree is a rooted graph that is also a tree, not to be confused with a tree having a designated vertex.}

\begin{lemma}[Bukh and Conlon~\cite{BC18}]\label{lem:bc-lb}
  For every balanced rooted tree $F$, there exists $p \in \N^+$ such that $\ex(n, F^p) = \Omega(n^{2-1/\rho_F})$.
\end{lemma}

It is conjectured in \cite{BC18} that the lower bound in \cref{lem:bc-lb} can be matched up to a constant factor.

\begin{conjecture}[The Bukh--Conlon Conjecture] \label{conj:bc}
  For every balanced rooted tree $F$ and every $p \in \N^+$, $\ex(n, F^p) = O(n^{2-1/\rho_F})$.
\end{conjecture}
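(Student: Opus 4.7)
The plan is to take a host graph $G$ on $n$ vertices with at least $C n^{2-1/\rho_F}$ edges (for $C = C(F, p)$ sufficiently large) and embed $F^p$ into it, by first finding a root image $\phi_0 : R(F) \hookrightarrow V(G)$ with many extensions to embeddings of $F$, then selecting $p$ internally vertex-disjoint extensions out of that pool. After the standard minimum-degree cleaning, I would assume $G$ has minimum degree on the order of $d := n^{1 - 1/\rho_F}$, and abbreviate $s := v(F) - |R(F)|$.

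The first ingredient is a \emph{rooted counting lemma}: in a graph of minimum degree $\gtrsim d$, the total number of embeddings of a balanced rooted tree $F$ is at least $c_1 \, n^{|R(F)|} d^s$. The natural inductive proof strips a non-root leaf $\ell$ of $F$; the balanced condition ensures $F - \ell$ is still balanced, so the count applies inductively, and each partial embedding extends by placing $\ell$ at one of at least $d - v(F)$ unused neighbors of its embedded parent. Averaging over root images then gives an injection $\phi_0$ admitting a family $\mathcal{E}$ of at least $c_1 d^s$ extensions to embeddings of $F$.

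Next, I would extract $p$ internally vertex-disjoint extensions greedily: at step $i$, at most $(i-1) s$ non-root vertices of $G$ are already used, and one discards any extension in $\mathcal{E}$ that meets any of them. The process succeeds provided that for every $v \in V(G)$, the number of extensions in $\mathcal{E}$ hitting $v$ on a non-root vertex is $O(d^{s-1})$: summing, the number of forbidden extensions at each step is $O(p s \, d^{s-1})$, which is $o(|\mathcal{E}|)$ once $C$ is large enough, and the resulting $p$ extensions together with $\phi_0$ produce a copy of $F^p$.

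The main obstacle is exactly this per-vertex \emph{upper bound}. It amounts to a companion counting lemma for $F$ with one non-root vertex $x$ upgraded to a root and sent to $v$; for that lemma, the balanced condition must propagate to the new rooted structure, i.e., one needs $\rho_{F'} \le \rho_F$ for every rooted sub-structure $F'$ of $F$ that could arise in the analysis. This is not automatic from \cref{def:rooted}, and isolating the "bad" sub-structures that would violate it is precisely the role of the \emph{negligible obstructions} framework advertised in the abstract. Turning that framework into an honest classification for all balanced rooted trees — and verifying that these bad sub-structures do not actually contribute to the count — is, in my view, the essential remaining step toward \cref{conj:bc}.
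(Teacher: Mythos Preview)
First, \cref{conj:bc} is an open conjecture; the paper does not prove it, only the special cases in \cref{thm:main}. So there is no proof in the paper to compare against, and a complete argument here would settle a well-known open problem.

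Your sketch contains concrete errors before the acknowledged gap. The counting lemma is misstated: in a graph of minimum degree $\ge d$ one has $\inj(F,G) = \Omega(n d^{e(F)})$, not $\Omega(n^{|R(F)|} d^{s})$. Already for $F = K_{1,2}$ (two root leaves, $s=1$, $\rho_F = 2$) your bound reads $n^2 d$ while the true count is $\Theta(n d^2)$; at $d = \Theta(n^{1/2})$ these are $n^{5/2}$ versus $n^2$, so your claimed lower bound is false. The proposed induction cannot start either: a balanced rooted tree with $\rho_F > 1$ has \emph{no} non-root leaf, since for such a leaf $\ell$ the choice $S = \{\ell\}$ in \cref{def:rooted} would force $1 \ge \rho_F$. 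The upshot is that after pigeonhole over root images the pool $\mathcal{E}$ has size only a large constant (of order $C^{e(F)}$ where $C$ is the density constant, exactly as in the argument following \cref{eqn:exclude-ext}), not $\Omega(d^{s})$; the greedy comparison $p s \cdot d^{s-1} = o(d^{s})$ is therefore vacuous.

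You correctly locate the real difficulty in your final paragraph: with only a constant-sized pool one must rule out that the extensions all collide on some non-root vertex, and this is precisely what the obstruction family (\cref{def:obstruction}) and the negligibility condition (\cref{def:neg}) formalise. But certifying negligibility for \emph{every} balanced rooted tree is the full content of the conjecture; the paper achieves it only for $T_{s,t,s'}$ under the hypotheses of \cref{thm:main}, and even there the certification (\cref{sec:star,sec:tree}) is the bulk of the work. Your proposal does not reduce that difficulty.
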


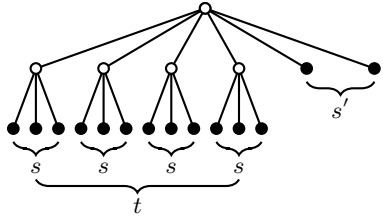
\begin{figure}
  \centering
  \begin{tikzpicture}[thick, scale=0.5]
    \draw (-4.5,-.4) -- (-5.1,-2) node[root]{};
    \draw (-4.5,-.4) -- (-4.5,-2) node[root]{};
    \draw (-4.5,-.4) -- (-3.9,-2) node[root]{};
    \draw [bracket] (-3.9,-2) -- (-5.1,-2) node[midway,yshift=-10pt]{\footnotesize $s$};
    \draw (0,1.2) -- (-4.5,-.4) node[vertex]{};

    \draw (-2.7,-.4) -- (-3.3,-2) node[root]{};
    \draw (-2.7,-.4) -- (-2.7,-2) node[root]{};
    \draw (-2.7,-.4) -- (-2.1,-2) node[root]{};
    \draw [bracket] (-2.1,-2) -- (-3.3,-2) node[midway,yshift=-10pt]{\footnotesize $s$};
    \draw (0,1.2) -- (-2.7,-.4) node[vertex]{};

    \draw (-.9,-.4) -- (-1.5,-2) node[root]{};
    \draw (-.9,-.4) -- (-.9,-2) node[root]{};
    \draw (-.9,-.4) -- (-.3,-2) node[root]{};
    \draw [bracket] (-.3,-2) -- (-1.5,-2) node[midway,yshift=-10pt]{\footnotesize $s$};
    \draw (0,1.2) -- (-.9,-.4) node[vertex]{};

    \draw (.9,-.4) -- (.3,-2) node[root]{};
    \draw (.9,-.4) -- (.9,-2) node[root]{};
    \draw (.9,-.4) -- (1.5,-2) node[root]{};
    \draw [bracket] (1.5,-2) -- (.3,-2) node[midway,yshift=-10pt]{\footnotesize $s$};
    \draw (0,1.2) -- (.9,-.4) node[vertex]{};

    \draw (0,1.2) node[vertex]{} -- (4.5,-.4) node[root]{};
    \draw (0,1.2) node[vertex]{} -- (2.7,-.4) node[root]{};

    \draw [bracket] (4.5,-.4) -- (2.7,-.4) node[midway,yshift=-10pt]{\footnotesize $s'$};

    \draw [bracket] (0.9,-3) -- (-4.5,-3) node[midway,yshift=-10pt]{\footnotesize $t$};
  \end{tikzpicture}
  \caption{$T_{s,t,s'}$ with roots in black.} \label{fig:2-tree}
\end{figure}

Given the fact that every rational number bigger than one indeed appears as the density of some balanced rooted tree (see \cite[Lemma~1.3]{BC18}), \cref{lem:bc-lb,conj:bc} would imply \cref{conj:main}.
Our main result establishes \cref{conj:bc} for certain balanced rooted trees $T_{s,t,s'}$ defined in \cref{fig:2-tree}.

\begin{theorem} \label{thm:main}
  For every $s, t \in \N^+$ and $s' \in \N$, when $s - s' \ge 2$ assume in addition that $t \ge s^3 - 1$. If the rooted tree $F := T_{s,t,s'}$ is balanced, then for every $p \in \N^+$, $\ex(n, F^p) = O(n^{2-1/\rho_F})$, where $\rho_F = (st + t + s')/(t + 1)$.
\end{theorem}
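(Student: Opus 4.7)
The plan is to execute the balanced supersaturation framework alluded to in the abstract: count copies of $F$ in $G$, show that overlapping configurations are negligible, and conclude by the deletion method. Let $G$ be a graph on $n$ vertices with $e(G) \ge C n^{2 - 1/\rho_F}$ for a sufficiently large constant $C$. After the standard min-degree reduction I may assume $\delta(G) \ge C' n^{1 - 1/\rho_F}$. For an injective map $\sigma \colon R(F) \to V(G)$, let $\ext(\sigma, G)$ count embeddings $F \hookrightarrow G$ that restrict to $\sigma$ on the roots, and set $\Ext(G) = \sum_\sigma \ext(\sigma, G)$. The goal is to find $\sigma$ admitting at least $p$ such embeddings that are pairwise vertex-disjoint away from $R(F)$, which together form the desired copy of $F^p$.

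My first step is a greedy lower bound on $\Ext(G)$: build a copy of $F = T_{s,t,s'}$ by choosing the image of the central vertex ($n$ options), then $s' + t$ distinct neighbors of it to play the $s'$ direct-root-leaves and the $t$ middle vertices ($\Omega(\delta^{s'+t})$ options), and finally, for each middle vertex, $s$ further distinct neighbors as its leaf-roots ($\Omega(\delta^s)$ options each). The identity $\rho_F = (st + t + s')/(t+1)$ equates the exponents at the critical threshold exactly, so this yields $\Ext(G) \ge C'' n^{\abs{R(F)}}$ with $C'' \to \infty$ as $C \to \infty$. Consequently, by convexity, the sum $\sum_\sigma \binom{\ext(\sigma, G)}{p}$ is at least $n^{\abs{R(F)}} \binom{C''}{p}$, a large positive quantity.

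The central step is to bound the \emph{obstructions}: $p$-tuples of embeddings realizing the same $\sigma$ in which two distinct embeddings share a non-root vertex. Such an overlap identifies two copies of $F$ along a non-empty set of non-root vertices, producing a rooted graph $\widetilde F$; the count of such bad overlaps extending $\sigma$, summed over $\sigma$, is at most $O(n^{\abs{R(F)}} \delta^{e(\widetilde F) - e(F)})$ for each combinatorial type of $\widetilde F$, and this is negligible relative to the main term precisely when every possible $\widetilde F$ is \emph{strictly} denser than $F$. The main obstacle is therefore a finite but delicate case analysis: enumerate the ways two copies of $T_{s,t,s'}$ sharing a root placement can overlap — centers identified, middle vertices identified role-preservingly or with role-swapping, and various partial identifications — and verify strict density inequality in each. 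Elementary computations handle the ``generic'' cases (for instance, identifying only the two centers yields a density gap proportional to $t(s + 1 - s')$, and identifying $k$ middle vertices role-preservingly without the centers yields a gap proportional to $k(t - (s - s'))$). The hypothesis $t \ge s^3 - 1$ when $s - s' \ge 2$ is anticipated to be tight in a more intricate subcase, plausibly one where many middle vertices are identified together with a delicate interaction between role-preserving and role-swapping identifications, so that the density gap scales like $t - \Theta(s^3)$ and only a cubic-in-$s$ threshold on $t$ suffices to force strictness.

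Once the obstructions are shown to be negligible, the deletion method applied to $\sum_\sigma \binom{\ext(\sigma,G)}{p}$ produces the required $\sigma$ with $p$ extensions pairwise disjoint off $R(F)$, i.e.\ a copy of $F^p$.
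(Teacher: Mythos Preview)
Your approach has a genuine gap at the central step. The claim that every overlap pattern $\widetilde F$ is strictly denser than $F$ already fails for a basic overlap: if two embeddings agree on the roots and on exactly one middle vertex (role-preservingly) but differ at the center, the resulting $\widetilde F$ has density $\rho_F + (t + s' - s)/\bigl((2t+1)(t+1)\bigr)$, and the gap vanishes when $t = s - s'$. Balance only guarantees $t \ge s - s'$, and when $s - s' \le 1$ the theorem imposes no extra hypothesis on $t$; for instance $(s,t,s') = (2,1,1)$ is covered by the statement yet sits exactly at this degeneracy, so your density comparison cannot close the argument there. Moreover, your counting formula $O\bigl(n^{\abs{R(F)}}\delta^{e(\widetilde F)-e(F)}\bigr)$ is not justified: the overlap graphs $\widetilde F$ are typically not trees (gluing two copies of $F$ along all the roots already creates cycles), so a greedy embedding count does not yield a bound of that shape; and comparing the bad-pair count to $\sum_\sigma \binom{\ext(\sigma,G)}{p}$ requires control of higher moments of $\ext(\sigma,G)$, which a min-degree reduction alone does not supply.

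The paper follows a different route. It first passes to an almost-regular host (both min- and max-degree $\Theta(n^{1-1/\rho_F})$), then replaces overlap graphs by an \emph{obstruction family} of proper subtrees of $F$, namely $\{K_{1,s+1}\} \cup \{T_{s,t-i,s'+i} : 1 \le i \le s-s'\}$, and shows each has few \emph{ample} embeddings. Negligibility of $K_{1,s+1}$ uses Erd\H{o}s's $r$-partite hypergraph Tur\'an theorem; negligibility of $T_{s,t-i,s'+i}$ is established by an induction on $i$ (from smaller $i$ to larger, i.e.\ from larger subtrees to smaller ones) that combines the sunflower lemma, K\H{o}v\'ari--S\'os--Tur\'an, and a dependent-random-choice estimate. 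The hypothesis $t \ge s^3 - 1$ enters not through any overlap-density comparison but as the numerical condition that makes the dependent-random-choice step close in the inductive argument for $i \ge 2$.
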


It is not hard to characterize the parameters $s, t, s'$ for which $T_{s,t,s'}$ is balanced.

\begin{proposition} \label{lem:balance-condition}
  For every $s, t \in \N^+$ and $s' \in \N$, the rooted tree $F = T_{s,t,s'}$ is balanced if and only if $\rho_F \ge \max(s,s')$ and $\rho_F > 1$, or equivalently $s' - 1 \le s \le t + s'$ and $(t, s') \neq (1, 0)$. \qed
\end{proposition}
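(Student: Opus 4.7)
My plan is a direct case analysis of the balance condition in \cref{def:rooted}. Write $c$ for the central vertex of $T_{s,t,s'}$ and $v_1,\ldots,v_t$ for the $t$ intermediate vertices, so that $V(F) \setminus R(F) = \set{c,v_1,\ldots,v_t}$. Any candidate subset $S \subseteq V(F) \setminus R(F)$ is then specified by whether $c \in S$ together with $k := \abs{S \cap \set{v_1,\ldots,v_t}} \in \set{0,1,\ldots,t}$, so only four edge counts need to be computed.

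The first step is to reconcile the two stated conditions. Clearing denominators in $\rho_F = (st+t+s')/(t+1)$ gives $\rho_F \ge s \Leftrightarrow s \le t+s'$ and $\rho_F \ge s' \Leftrightarrow s \ge s'-1$. Inside the range $s'-1 \le s \le t+s'$, the equation $\rho_F = 1$ reduces to $st + s' = 1$, which over $s,t \in \N^+$ and $s' \in \N$ forces $(s,t,s') = (1,1,0)$, i.e.\ $(t,s') = (1,0)$; this reconciles the two equivalent formulations in the statement.

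For necessity of ``$\rho_F \ge \max(s,s')$'', I would apply the balance inequality to the two smallest nontrivial subsets: $S = \set{v_i}$ yields $e(S) = s+1$, forcing $\rho_F \le s+1$, equivalently $\rho_F \ge s'$; while $S = \set{c}$ yields $e(S) = t+s'$, forcing $\rho_F \le t+s'$, equivalently $\rho_F \ge s$. For sufficiency, assume $\rho_F > 1$ and $\rho_F \ge \max(s,s')$. When $c \notin S$ the edge count is $k(s+1)$ and the balance inequality reduces to $\rho_F \le s+1$, already established. When $c \in S$ the edge count is $t+s'+ks$, and after clearing denominators the required inequality $(t+s'+ks)(t+1) \ge (st+t+s')(k+1)$ telescopes to $(t-k)(t-s+s') \ge 0$, which holds because $0 \le k \le t$ and $s \le t+s'$. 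No step is genuinely delicate; the only mild obstacle is juggling the algebraic equivalences between conditions on $\rho_F$ and conditions on $(s,t,s')$, which is consistent with the paper's decision to mark the proposition with an immediate $\qed$.
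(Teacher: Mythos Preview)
Your argument is correct: the case split on whether $c \in S$ and the count $k = \abs{S \cap \set{v_1,\dots,v_t}}$ exhausts all subsets, the edge counts $k(s+1)$ and $t+s'+ks$ are right, and the factorization $(t-k)(t-s+s') \ge 0$ cleanly handles the $c \in S$ case. The paper itself supplies no proof (it simply marks the proposition with $\qed$), so your direct verification is exactly the routine check the authors left to the reader.
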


Prior to our work, \cref{conj:bc} has been verified for the balanced rooted trees in \cref{fig:known}: the $K_s^{(0)}$ and $P_t$ cases are classical results due to K\H{o}v\'ari, S\'os and Tur\'an~\cite{KST54}, and respectively Faudree and Simonovits~\cite{FS83}; $Q_{s,1}$ and $S_{2,1,0}$ are due to Jiang, Ma and Yepremyan~\cite{JMY18}; $Q_{s,t}$ and $T_{4,7}$ are due to Kang, Kim and Liu~\cite{KKL18}; $K_s^{(1)}$ and $S_{s,t,0}$ are due to Conlon, Janzer and Lee~\cite{CJL19}; $K^{(2)}_s$ and $K^{(3)}_s$ are due to Jiang and Qiu~\cite{JQ19}; $K^{(t)}_s$ is due to Janzer~\cite{J20}; and $S_{s,t,t'}$ for all $t' \le t$ is very recently settled by Jiang and Qiu~\cite{JQ20}.

\begin{figure}[b]
  \centering
  \begin{tikzpicture}[thick, scale=0.45, baseline=(v.base)]
    \coordinate (v) at (0,0);
    \draw (-3,0) -- (-2,-0.8) node[vertex]{} -- (-1,-0.8) node[vertex]{} -- (0,-0.8) node[vertex]{} -- (1,-0.8) node[vertex]{} -- (2,-0.8) node[root]{};
    \draw (-3,0) -- (-2,0) node[vertex]{} -- (-1,0) node[vertex]{} -- (0,0) node[vertex]{} -- (1,0) node[vertex]{} -- (2,0)node[root]{};
    \draw (-3,0) node[vertex]{} -- (-2,0.8) node[vertex]{} -- (-1,0.8) node[vertex]{} -- (0,0.8) node[vertex]{} -- (1,0.8) node[vertex]{} -- (2,0.8) node[root]{};
    \draw [bracket-r] (2,0.8) -- (2,-0.8) node[black,midway,xshift=10pt]{\footnotesize $s$};
    \draw [bracket] (1,-0.8) -- (-2,-0.8) node[black,midway,yshift=-10pt]{\footnotesize $t$};
    \node [below] at (0,-2.9) {$K^{(t)}_s$};
  \end{tikzpicture}\qquad%
  \begin{tikzpicture}[thick, scale=0.45, baseline=(v.base)]
    \coordinate (v) at (0,0);
    \draw (-2,0) node[root]{} -- (-1,0) node[vertex]{} -- (0,0) node[vertex]{} -- (1,0) node[vertex]{} -- (2,0) node[vertex]{} -- (3,0) node[root]{};
    \draw [bracket] (2,0) -- (-1,0) node[black,midway,yshift=-10pt]{\footnotesize $t$};
    \node [below] at (0.5,-3.2) {$P_t$};
  \end{tikzpicture}\qquad%
  \begin{tikzpicture}[thick, scale=0.45, baseline=(v.base)]
    \coordinate (v) at (0,-0.8);
    \draw (0,1.2) node[root]{} -- (0,0.4);
    \draw (-0.6,1.2) node[root]{} -- (0,0.4);
    \draw (0.6,1.2) node[root]{} -- (0,0.4);
    \draw [bracket-u] (-0.6,1.2) -- (0.6,1.2) node[black,midway,yshift=10pt]{\footnotesize $s$};
    \draw (-2.7,-.4) -- (-3.3,-1.2) node[root]{};
    \draw (-2.7,-.4) -- (-2.7,-1.2) node[root]{};
    \draw (-2.7,-.4) -- (-2.1,-1.2) node[root]{};
    \draw [bracket] (-2.1,-1.2) -- (-3.3,-1.2) node[black,midway,yshift=-10pt]{\footnotesize $s$};
    \draw (0,.4) -- (-2.7,-.4) node[vertex]{};
    \draw (-.9,-.4) -- (-1.5,-1.2) node[root]{};
    \draw (-.9,-.4) -- (-.9,-1.2) node[root]{};
    \draw (-.9,-.4) -- (-.3,-1.2) node[root]{};
    \draw [bracket] (-.3,-1.) -- (-1.5,-1.2) node[black,midway,yshift=-10pt]{\footnotesize $s$};
    \draw (0,.4) -- (-.9,-.4) node[vertex]{};
    \draw (.9,-.4) -- (.3,-1.2) node[root]{};
    \draw (.9,-.4) -- (.9,-1.2) node[root]{};
    \draw (.9,-.4) -- (1.5,-1.2) node[root]{};
    \draw [bracket] (1.5,-1.2) -- (.3,-1.2) node[black,midway,yshift=-10pt]{\footnotesize $s$};
    \draw (0,.4) -- (.9,-.4) node[vertex]{};
    \draw (2.7,-.4) -- (2.1,-1.2) node[root]{};
    \draw (2.7,-.4) -- (2.7,-1.2) node[root]{};
    \draw (2.7,-.4) -- (3.3,-1.2) node[root]{};
    \draw [bracket] (3.3,-1.2) -- (2.1,-1.2) node[black,midway,yshift=-10pt]{\footnotesize $s$};
    \draw (0,.4) node[vertex]{} -- (2.7,-.4) node[vertex]{};
    \draw [bracket] (2.7,-2.4) -- (-2.7,-2.4) node[black,midway,yshift=-10pt]{\footnotesize $t$};
    \draw (0,-4) node[below]{$Q_{s,t}$};
  \end{tikzpicture}\qquad%
  \begin{tikzpicture}[thick, scale=0.45, baseline=(v.base)]
    \coordinate (v) at (0,0);
    \draw (-3,0) -- (-2,-1.2) node[vertex]{} -- (-1,-1.2) node[vertex]{} -- (0,-1.2) node[vertex]{} -- (1,-1.2) node[root]{};
    \draw (-3,0) -- (-2,-0.4) node[vertex]{} -- (-1,-0.4) node[vertex]{} -- (0,-0.4) node[vertex]{} -- (1,-0.4) node[vertex]{} -- (2,-0.4)node[root]{};
    \draw (-3,0) -- (-2,0.4) node[vertex]{} -- (-1,0.4) node[vertex]{} -- (0,0.4) node[vertex]{} -- (1,0.4) node[vertex]{} -- (2,0.4) node[root]{};
    \draw (-3,0) node[vertex]{} -- (-2,1.2) node[vertex]{} -- (-1,1.2) node[vertex]{} -- (0,1.2) node[vertex]{} -- (1,1.2) node[vertex]{} -- (2,1.2) node[root]{};
    \draw [bracket-r] (2,1.2) -- (2,-0.4) node[black,midway,xshift=10pt]{\footnotesize $s$};
    \draw [bracket-u] (-2,1.2) -- (1,1.2) node[black,midway,yshift=10pt]{\footnotesize $t$};
    \draw [bracket] (0,-1.2) -- (-2,-1.2) node[black,midway,yshift=-10pt]{\footnotesize $t'$};
    \node [below] at (0,-3.2) {$S_{s,t,t'}$};
  \end{tikzpicture}\qquad%
  \begin{tikzpicture}[thick, scale=0.45, baseline=(v.base)]
    \coordinate (v) at (0,0);
    \draw (-1,-1) node[root]{} -- (-1,0);
    \draw (0,-1) node[root]{} -- (0,0);
    \draw (1,-1) node[root]{} -- (1,0);
    \draw (2,-1) node[root]{} -- (2,0);
    \draw (-1,0) node[vertex]{} -- (0,0) node[vertex]{} -- (1,0) node[vertex]{} -- (2,0) node[vertex]{};
    \node [below] at (0.5,-3.2) {$T_{4,7}$};
  \end{tikzpicture}
  \caption{Balanced rooted trees, where $s, t, t'$ refer to vertices, except $t$ in $Q_{s,t}$.} \label{fig:known}
\end{figure}
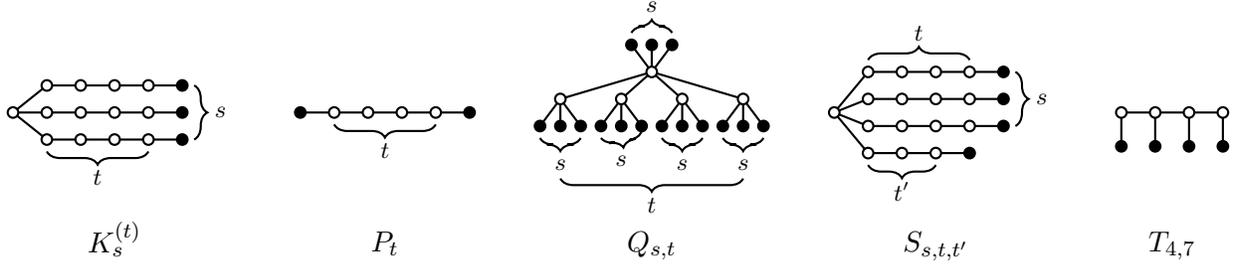

These recent attacks on the Bukh--Conlon conjecture are full of interesting and promising techniques. In this paper, inspired by these previous attempts, we formulate an underlying framework that centers around a notion which we call negligible obstructions (\cref{def:neg,def:obstruction}). In this context, we develop a lemma (\cref{lem:master}), which we call the negligibility lemma, to connect negligible obstructions with the Bukh--Conlon conjecture.
To our best knowledge, ideas in our formulation of the framework can be traced back to the work of Conlon and Lee~\cite{CL19}, and can be spotted throughout later work by various authors.

To establish an instance of the Bukh--Conlon conjecture, the negligibility lemma naturally leads to a two-step strategy: the identification of obstructions and the certification of their negligibility. By no means we claim that this strategy reduces the difficulty of \cref{conj:bc}. Nevertheless we propose this strategy in hopes that it will bring us one step closer to pinning down a handful of essentially different techniques in this area, akin to the theory of flag algebras~\cite{R07}.

We illustrate the above two steps with the proof of \cref{thm:main}. In contrast with all the previous work which has the inductive flavor of certifying negligibility of larger obstructions by that of the smaller, our implementation of the second step has a distinctive inductive pattern, which is elaborated at the end of \cref{sec:idea}. We point out that although \cref{thm:main} can be seen as an extension of \cite[Section~3]{KKL18} which dealt with $Q_{s,t}$, our approach is quite different.

Turning to realizability of rational exponents, our main result \cref{thm:main} gives realizability of the following rational exponents.

\begin{corollary} \label{cor:main}
  For every rational number $r\in (1,2)$ of the form $2 - a/b$, where $a, b \in \N^+$, if 
  \begin{equation} \label{eqn:fractions-2}
    \floor{b/a}^3 \le a \le b/(\floor{b/a} + 1) + 1,
  \end{equation}
  then there exists a bipartite graph $F_r$ such that $\ex(n, F_r) = \Theta(n^r)$.
\end{corollary}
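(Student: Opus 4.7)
The plan is to realise $F_r$ as a power $F^p$ of a carefully chosen balanced rooted tree of the form $F = T_{s,t,s'}$, with the lower bound on $\ex(n, F^p)$ coming from \cref{lem:bc-lb} and the matching upper bound from \cref{thm:main}. The entire problem thus reduces to picking $s,t,s' \in \N$ so that (a) $\rho_F = b/a$, (b) $T_{s,t,s'}$ is balanced (per \cref{lem:balance-condition}), and (c) the extra hypothesis ``$t \ge s^3 - 1$ whenever $s - s' \ge 2$'' of \cref{thm:main} is satisfied.

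Rewriting
\[
  \rho_F = \frac{t(s+1) + s'}{t+1} = (s+1) - \frac{s+1-s'}{t+1},
\]
one is led naturally to take $s := \floor{b/a}$, so that $b/a = (s+1) - d/a$ with $d := a - (b - sa) \in \set{1, \dots, a}$, and then $t := a - 1$ and $s' := s + 1 - d$; a direct check yields $\rho_F = b/a$. Each requirement then translates cleanly into \eqref{eqn:fractions-2}: the upper inequality $a \le b/(\floor{b/a}+1) + 1$ rearranges to $d \le s + 1$, which is exactly $s' \ge 0$; the balance inequalities $s' - 1 \le s \le t + s'$ of \cref{lem:balance-condition} reduce to $0 \le d \le a$, which is automatic; the forbidden case $(t, s') = (1, 0)$ would force $a = 2$ and $d = s + 1$, and a short computation shows these together entail $b = 2$ and hence $r = 1$, contradicting $r \in (1,2)$; finally, since $s - s' = d - 1$, the additional hypothesis of \cref{thm:main} is vacuous for $d \le 2$ and amounts to $a \ge \floor{b/a}^3$ for $d \ge 3$, i.e.\ precisely the lower bound in \eqref{eqn:fractions-2}. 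Along the way one also verifies $a \ge 2$, which follows from $\floor{b/a}^3 \le a$ together with $r > 1$, so that $t = a - 1 \in \N^+$.

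With $F := T_{s,t,s'}$ balanced and in the scope of \cref{thm:main}, \cref{lem:bc-lb} produces some $p \in \N^+$ with $\ex(n, F^p) = \Omega(n^{2 - 1/\rho_F}) = \Omega(n^r)$, and \cref{thm:main} matches it with $\ex(n, F^p) = O(n^r)$. Bipartiteness of $F_r := F^p$ is automatic: since $F$ is a tree, fixing any $2$-colouring of $F$ and propagating it identically to each of the $p$ copies used to build $F^p$ gives a consistent $2$-colouring, because the shared roots receive the same colour in every copy. There is no substantive obstacle here beyond \cref{thm:main} itself; the corollary is a bookkeeping exercise converting the two inequalities in \eqref{eqn:fractions-2} into the two nontrivial hypotheses on $(s,t,s')$ in \cref{thm:main}.
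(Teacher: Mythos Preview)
Your proof is correct and follows essentially the same approach as the paper: the same choice $s=\floor{b/a}$, $t=a-1$, $s'=b-(a-1)(\floor{b/a}+1)$ (your $s'=s+1-d$ is identical), followed by verifying the balance condition via \cref{lem:balance-condition} and the extra hypothesis of \cref{thm:main}, then invoking \cref{lem:bc-lb} and \cref{thm:main}. The only cosmetic differences are that you introduce the auxiliary parameter $d$ and use the second equivalent form of the balance criterion in \cref{lem:balance-condition}, whereas the paper checks $\rho_T \ge \max(s,s')$ and $\rho_T>1$ directly; you also spell out the bipartiteness of $F^p$, which the paper leaves implicit.
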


\begin{proof}
  In case $a = 1$, \cref{eqn:fractions-2} forces $b = 1$, which contradicts with the assumption that $r > 1$. Hereafter we assume that $a \ge 2$.
  Now take $s = \floor{b/a}$, $t = a-1$ and $s' = b - (a-1)(\floor{b/a} + 1)$. Set $T = T_{s,t,s'}$. One can easily check that $s, t \in \N^+$, $\rho_T = (st+t+s')/(t+1) = b/a$ and so $\rho_T > 1, \rho_T \ge s$ and $s' \le b - (a-1)b/a = \rho_T$. Observe that \cref{eqn:fractions-2} is equivalent to $t \ge s^3 - 1$ and $s' \ge 0$. In view of \cref{lem:balance-condition}, $T$ is balanced. The corollary follows from \cref{lem:bc-lb,thm:main} immediately.
\end{proof}

As far as we know, all the rationals in $(1,2)$ for which \cref{conj:main} has been verified can be derived from \cref{lem:bc-lb} and the existing instances of \cref{conj:bc}. For convenience, we say a fraction $b/a$ is a \emph{Bukh--Conlon density} if there exists a balanced rooted tree $F$ such that $\rho_F = b/a$ and $\ex(n, F^p) = O(n^{2-1/\rho_F})$ for every $p \in \N^+$. Kang, Kim and Liu observed in \cite[Lemma~4.3]{KKL18} that a graph densification operation due to Erd\H{o}s and Simonovits~\cite{ES70} can be used to generate more Bukh--Conlon densities: whenever $b/a$ is a Bukh--Conlon density, so is $m + b/a$ for every $m \in \N$.

It appears reasonable to restrict our attention to the fractions $b/a$ of the form $m + s/a$ where $m \in \N^+$, for fixed $s, a \in \N$ with $s < a$. The results listed in \cref{fig:known} yield Bukh--Conlon densities $m + s/a$ for every $m \in \N^+$ whenever $s\lceil (a-1)/(s+1) \rceil \le a-1$.\footnote{Combining \cite[Lemma~4.3]{KKL18} with the results listed in \cref{fig:known} (essentially with the one on $S_{s,t,t'}$), we know that $m + s/(st + t' + 1)$ is a Bukh--Conlon density for $m, s \in \N^+$ and $t, t' \in \N$ with $t' \le t$. For $m + s/a$ to be a fraction of such form, one needs $st + 1 \le a \le st + t + 1$ for some $t \in \N$, or equivalently $s\lceil (a-1)/(s+1) \rceil \le a-1$.} For many choices of $(s,a)$, for example $(4,7)$, $(5,8)$ or $(7,10)$, it was not known whether $m + s/a$ is a Bukh--Conlon density for any $m \in \N^+$. For comparison, the family of fractions $b/a$ given by \cref{eqn:fractions-2} generates the Bukh--Conlon densities $m + s/a$ for all $m \ge a - s - 1$ whenever $a - 1 - \sqrt[3]{a} \le s \le a - 1$. In particular, our result gives new Bukh--Conlon densities of the form $m + 5/8$ and $m + 7/10$ as long as $m \ge 2$. Unfortunately our result does not give any Bukh--Conlon densities of the form $m + 4/7$.
The above discussion leads us to the following conjecture on Bukh--Conlon densities.

\begin{conjecture} \label{conj:bc-density}
  For every $s, a \in \N$ with $s < a$, there exists $m \in \N^+$ such that $m + s/a$ is a Bukh--Conlon density.
\end{conjecture}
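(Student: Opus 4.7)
The plan is to reduce the arithmetic statement to an existence statement about balanced rooted trees. Given coprime $\alpha, a \in \N^+$ with $\alpha < a$, it suffices to produce a single balanced rooted tree $F$ whose density $\rho_F$ has fractional part $\alpha/a$ and satisfies $\ex(n, F^p) = O(n^{2-1/\rho_F})$ for every $p \in \N^+$; by the Erd\H{o}s--Simonovits densification operation $\rho_F$ then generates $m + \rho_F$ as a Bukh--Conlon density for every $m \in \N$, and some such integer shift has the required form $m + \alpha/a$. Thus the conjecture is equivalent to the statement that every fractional part $\alpha/a$ is realized by the density of \emph{some} balanced rooted tree for which the Bukh--Conlon bound has been verified.

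Parameterizing $F = T_{s,t,s'}$ as in the proof of \cref{cor:main} with $t+1 = ka$ yields $s' = m + 1 - k(a-\alpha)$; non-negativity of $s'$ demands $m \ge k(a-\alpha)-1$, while the cubic hypothesis $t \ge s^3-1$ of \cref{thm:main} demands $m^3 \le ka$. These bounds are mutually satisfiable only when $\alpha$ is close to $a$ (as captured by $\alpha \ge a - \sqrt[3]{a} - 1$ in \cref{cor:main}), so the first substantive step is to weaken the cubic threshold. I would revisit the inductive certification of negligibility that produces the exponent $3$ in \cref{thm:main}, aiming to replace the crudest pigeonhole steps in the layer-by-layer argument with dependent random choice or entropy-type estimates; the target would be a bound of the form $t \ge Cs$, which suffices to extend \cref{cor:main} to nearly every fractional part attainable by $T_{s,t,s'}$.

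However, even a sharpened threshold cannot help when $\alpha$ is much smaller than $a$, because the two-layer family $T_{s,t,s'}$ lacks the flexibility to realize such fractions without driving some branching parameter large. The second component of the plan is therefore to introduce a multi-layer family (in the spirit of the $S_{s,t,t'}$ trees of \cite{JQ20}, but with an arbitrary number of internal levels) designed so that for each $\alpha/a$ some tree in the family realizes $m + \alpha/a$ while keeping every branching parameter bounded in terms of $a - \alpha$ rather than $a$. For each such tree, the negligibility lemma (\cref{lem:master}) still supplies the skeleton, but the associated class of obstructions and the certification of their negligibility must be constructed afresh.

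The principal obstacle is uniform control of negligibility across an unbounded family of tree shapes. Each added internal layer spawns new obstruction types, and in the existing literature the negligibility of such obstructions is established by shape-specific inductions whose constants tend to degrade with depth. A full proof of the conjecture almost certainly requires a single structural lemma---a transfer principle within the framework of this paper---that propagates negligibility across layers with bounds independent of the particular tree. Such a lemma would reduce the conjecture to the combinatorial arithmetic of densities, which, as \cref{cor:main} already indicates, is the easy half; the authors' analogy with flag algebras suggests that isolating such a principle is the most promising long-term avenue.
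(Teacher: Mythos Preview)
The statement under discussion is stated in the paper as a \emph{conjecture}, not a theorem; the paper gives no proof and explicitly leaves it open. There is thus no ``paper's own proof'' to compare against, and what you have written is likewise not a proof but a research programme.

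The paper does, however, make one concrete remark immediately after the conjecture: dropping the technical hypothesis $t\ge s^3-1$ from \cref{thm:main} would already settle it. Your second paragraph partly rediscovers this, but then takes a wrong turn. You aim only for a linear threshold $t\ge Cs$ with an unspecified constant and conclude that ``even a sharpened threshold cannot help when $\alpha$ is much smaller than $a$,'' which leads you to propose a detour through multi-layer trees and a speculative ``transfer principle.'' This is more than is required. With the parametrisation from the proof of \cref{cor:main}, namely $s=m$, $t=a-1$, $s'=m+1-(a-\alpha)$, one has $s'\ge 0$ precisely when $m\ge a-\alpha-1$, and on the nonempty range $a-\alpha-1\le m\le a-1$ one also has $t-s=(a-1)-m\ge 0$. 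Hence the two-layer family $T_{s,t,s'}$ already realises every fractional part once the threshold is weakened to $t\ge s$, let alone removed entirely; no new tree family is needed.

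So the genuine gap is exactly the one the paper names and you have not closed: improving the negligibility argument in \cref{sec:tree} so that condition \cref{eqn:t-cond} (equivalently the cubic bound in \cref{thm:main}) can be dispensed with. Your suggestion to replace pigeonhole by dependent random choice there is reasonable as a direction, but you have offered no argument, and the remainder of your plan addresses difficulties that do not actually arise for this conjecture.
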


We point out that one would settle \cref{conj:bc-density} if one could remove the technical condition $t \ge s^3 - 1$ from \cref{thm:main}.

\begin{remark}
  After this work is completed, Conlon and Janzer~\cite{CJ22}, partly building on our ideas, improved \cref{thm:main} by removing the technical condition $t \ge s^3 - 1$, and they hence resolved \cref{conj:bc-density}.
\end{remark}

The rest of the paper is organized as follows. In \cref{sec:idea} we flesh out the aforementioned framework, and use it to prove \cref{thm:main}. In \cref{sec:main} we prove the negligibility lemma. In \cref{sec:star,sec:tree} we certify the negligibility of two different obstructions needed for the proof of \cref{thm:main}.

\section{Negligible obstruction family} \label{sec:idea}

Throughout the rest of the paper, when we view a tree $F$ as a rooted tree, by default the root set $R(F)$ of $F$ consists exactly of the leaves of $F$. We use $V(G)$ and $E(G)$ to denote the vertex set and the edge set of $G$ respectively.

To motivate the relevant concepts, it is instructive to think about finding a copy of $F^p$ in an $n$-vertex $d$-regular graph $G$, where $F$ is a tree and $d = \omega(n^{1-1/\rho_F})$. We mostly talk about embeddings rather than subgraphs.

\begin{definition}[Embedding]
  Given a tree $F$ and a graph $G$, denote $\Inj(F, G)$ the set of \emph{embeddings} from $F$ to $G$, that is, the set of injections $\eta\colon V(F) \to V(G)$ such that $\eta(e) \in E(G)$ for every $e \in E(F)$. For a subset $U$ of $R(F)$ and an injection $\sigma\colon U \to V(G)$, denote the set of embeddings from $F$ to $G$ relativized to $\sigma$ by
  $$
    \Inj(F, G; \sigma) = \dset{\eta \in \Inj(F, G)}{\eta(u) = \sigma(u) \text{ for every }u \in U}.
  $$
  When we write these operators (and the ones coming later) in lowercase, we refer to their cardinalities, for example, $\inj(F, G) = \abs{\Inj(F, G)}$ and $\inj(F, G; \sigma) = \abs{\Inj(F, G; \sigma)}$.
\end{definition}

\begin{remark}
  We encourage the readers who are accustomed to counting subgraphs to think of the embedding counting $\inj(F, G)$ as the corresponding subgraph counting of $F$ in $G$, because they merely differ by a multiplicative factor depending only on $F$. We choose embeddings over subgraphs based on the pragmatic reason that it is more succinct to write in the language of embeddings when counting relativized to some injection $\sigma$.
\end{remark}

Note that $\inj(F, G) \ge \Omega(nd^{e(F)})$ as one can embed $F$ into $G$ one vertex at a time. Because $nd^{e(F)} = \omega(n^{1+e(F)(1-1/\rho_F)}) = \omega(n^{1+e(F)-v(F)+\abs{R(F)}}) = \omega(n^{\abs{R(F)}})$, by the pigeonhole principle, there exists $\sigma \colon R(F) \to V(G)$ such that $\inj(F, G; \sigma) = \omega(1)$.
Ideally the images of $V(F) \setminus R(F)$ under some $p$ embeddings in $\Inj(F, G; \sigma)$ are pairwise (vertex) disjoint, and thus such $p$ embeddings would give us a copy of $F^p$ in $G$. To that end, we define the following notion.

\begin{definition}[Ample embedding] \label{def:ample}
  Given a tree $F$ and a graph $G$, for $\eta \in \Inj(F, G)$, we say $\eta$ is \emph{$C$-ample} if there exist $\eta_1, \dots, \eta_{C} \in \Inj(F, G)$ such that $\eta_i$ and $\eta$ are identical on $R(F)$, and the images of $V(F) \setminus R(F)$ under $\eta_1, \dots, \eta_{C}$ are pairwise disjoint. Given $C \in \N$, denote $\Amp_{C}(F, G)$ the set of $C$-ample embeddings from $F$ to $G$. For a subset $U$ of $R(F)$ and an injection $\sigma\colon U \to V(G)$, the relativized version of $\Amp_{C}(F, G)$, denoted by $\Amp_C(F, G; \sigma)$, is just $\Amp_C(F, G) \cap \Inj(F, G; \sigma)$.
\end{definition}

However it could happen that many embeddings in $\Inj(F, G; \sigma)$ map a nonempty subset of $V(F) \setminus R(F)$ in the same way, thus preventing us from finding a $p$-ample embedding in $\Inj(F, G; \sigma)$. These possible obstructions are encapsulated in the following definitions.

\begin{definition}[Rooted subgraph]
  Given two rooted graphs $F_1$ and $F_2$, we say that $F_2$ contains $F_1$ as a \emph{rooted subgraph} if there exists an embedding $\eta$ from $F_1$ to $F_2$ such that for every $v \in V(F_1)$, $\eta(v) \in R(F_2)$ if and only if $v \in R(F_1)$.
\end{definition}

\begin{definition}[Obstruction family] \label{def:obstruction}
  Given a tree $F$, a family $\F_0$ of trees is an \emph{obstruction family} for $F$ if every member of $\F_0$ is isomorphic to a subtree of $F$ that is not a single edge, and moreover for every nonempty proper subset $U$ of $V(F) \setminus R(F)$, after adding $U$ to the root set of $F$, the resulting rooted graph contains a member of $\F_0$ as a rooted subgraph. (See \cref{fig:cut-tree,lem:obstruction-char} for a concrete example of an obstruction family.)
\end{definition}

\begin{figure}
  \centering
  \begin{tikzpicture}[thick, scale=0.5]
    \fill[gray!60] (-.9,-.4) circle (0.5);
    \fill[gray!60] (.9,-.4) circle (0.5);
    \fill[gray!60] (-.9,-.9)--(.9,-.9)--(.9,.1) --(-.9,.1)--cycle;
    \node at (1.8,-.4) {\footnotesize $U$};

    \draw (-4.5,-.4) -- (-5.1,-2) node[root]{};
    \draw (-4.5,-.4) -- (-4.5,-2) node[root]{};
    \draw (-4.5,-.4) -- (-3.9,-2) node[root]{};
    \draw (0,1.2) -- (-4.5,-.4) node[vertex]{};

    \draw (-2.7,-.4) -- (-3.3,-2) node[root]{};
    \draw (-2.7,-.4) -- (-2.7,-2) node[root]{};
    \draw (-2.7,-.4) -- (-2.1,-2) node[root]{};
    \draw (0,1.2) -- (-2.7,-.4) node[vertex]{};

    \draw (-.9,-.4) -- (-1.5,-2) node[root]{};
    \draw (-.9,-.4) -- (-.9,-2) node[root]{};
    \draw (-.9,-.4) -- (-.3,-2) node[root]{};
    \draw (0,1.2) -- (-.9,-.4) node[vertex]{};

    \draw (.9,-.4) -- (.3,-2) node[root]{};
    \draw (.9,-.4) -- (.9,-2) node[root]{};
    \draw (.9,-.4) -- (1.5,-2) node[root]{};
    \draw (0,1.2) -- (.9,-.4) node[vertex]{};

    \draw (0,1.2) node[vertex]{} -- (2.7,-.4) node[root]{};
    \draw (0,1.2) node[vertex]{} -- (4.5,-.4) node[root]{};
    
    \draw[->] (5.5,-0.4) -- (8.5,-0.4);

    \begin{scope}[shift={(14, 0)}]
      \fill[gray!60] (0,1.2) circle (0.5);
      \fill[gray!60] (-.9,-.4) circle (0.5);
      \fill[gray!60] (4.5,-.4) circle (0.5);
      \fill[gray!60] (-.9,-.9)--(4.5,-.9)--(4.6675,0.0711) --(0.1675,1.6711)--(-0.4358,1.4451)--(-1.3358,-0.1549)--cycle;

      \draw (-4.5,-.4) -- (-5.1,-2) node[root]{};
      \draw (-4.5,-.4) -- (-4.5,-2) node[root]{};
      \draw (-4.5,-.4) -- (-3.9,-2) node[root]{};
      \draw (0,1.2) -- (-4.5,-.4) node[vertex]{};
  
      \draw (-2.7,-.4) -- (-3.3,-2) node[root]{};
      \draw (-2.7,-.4) -- (-2.7,-2) node[root]{};
      \draw (-2.7,-.4) -- (-2.1,-2) node[root]{};
      \draw (0,1.2) -- (-2.7,-.4) node[vertex]{};  
    
      \draw (-.9,-.4) -- (-1.5,-2) node[root]{};
      \draw (-.9,-.4) -- (-.9,-2) node[root]{};
      \draw (-.9,-.4) -- (-.3,-2) node[root]{};
      \draw (0,1.2) -- (-.9,-.4) node[root]{};

      \draw (.9,-.4) -- (1.5,-2) node[root]{};
      \draw (.9,-.4) -- (.9,-2) node[root]{};
      \draw (.9,-.4) -- (.3,-2) node[root]{};
      \draw (0,1.2) -- (.9,-.4) node[root]{};

      \draw (0,1.2) -- (4.5,-.4) node[root]{};
      \draw (0,1.2) node[vertex]{} -- (2.7,-.4) node[root]{};
    \end{scope}
      \end{tikzpicture}
  \caption{After adding $U$ to the root set of $T_{3,4,2}$, the resulting rooted graph contains $K_{1,4}$ as a rooted subgraph.} \label{fig:cut-tree}
\end{figure}
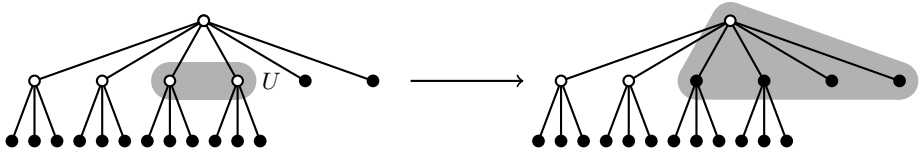

The following definition quantifies the conditions on the obstruction family for $F$ that ensure the existence of a $p$-ample embedding from $F$ to $G$.

\begin{definition}[Negligible obstruction] \label{def:neg}
  Given two trees $F_0$ and $F$, we say that $F_0$ is \emph{negligible} for $F$ if for every $p \in \N^+$ and $\eps > 0$ there exist $c_0 > 0$ and $C_0 \in \N$ such that the following holds. For every $c > c_0$ and every $n$-vertex graph $G$ with $n \ge n_0(c)$, if every vertex in $G$ has degree between $d$ and $Kd$, where $d = cn^\alpha$, $K = 5^{4/\alpha}$ and $\alpha = 1-1/\rho_F$, and moreover $\amp_p(F, G) = 0$, then $\amp_{C_0}(F_0, G) \le \eps nd^{e(F_0)}$. An obstruction family for $F$ is negligible if every member of the family is negligible for $F$.
\end{definition}

\begin{remark}
  As we shall see later in \cref{sec:star,sec:tree}, when certifying the negligibility of an obstruction family, the concrete form of $K$ is unimportant as long as it depends only on $F$. However, since we only need that specific $K$ for \cref{lem:master} to work, we state it explicitly to avoid introducing an additional universal quantifier in \cref{def:neg}.
\end{remark}

We wrap up the above discussion in the following lemma, and we postpone its proof to \cref{sec:main}.

\begin{lemma}[Negligibility lemma] \label{lem:master}
  Given a tree $F$, if there exists a negligible obstruction family $\F_0$ for $F$, then $\ex(n, F^p) = O(n^{2-1/\rho_F})$ for every $p \in \N^+$.
\end{lemma}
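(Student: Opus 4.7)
The plan is to argue by contradiction: suppose $G$ is an $n$-vertex graph with $e(G) \geq c\, n^{1+\alpha}$ edges, where $\alpha := 1 - 1/\rho_F$, and that $G$ is $F^p$-free. I aim to derive a contradiction provided $c$ is sufficiently large in terms of $F$, $p$, and the negligibility parameters from \cref{def:neg}. First I pass to an almost-regular subgraph via a standard Erd\H{o}s--Simonovits regularization, obtaining $G' \subseteq G$ on $n'$ vertices with every degree in $[d, Kd]$, where $d = c'(n')^\alpha$ and $K = 5^{4/\alpha}$ is exactly the constant from \cref{def:neg} triggering negligibility. Since $G'$ is still $F^p$-free, $\amp_p(F, G') = 0$. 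A greedy vertex-by-vertex embedding of $F$ in $G'$ yields the lower bound $\inj(F, G') \geq \tfrac{1}{2}\, n' d^{e(F)}$. The degenerate case $v(F) - \abs{R(F)} = 1$, i.e.\ $F$ is a star, is the classical K\H{o}v\'ari--S\'os--Tur\'an case and can be dealt with directly; otherwise assume $v(F) - \abs{R(F)} \geq 2$ so that nonempty proper subsets of $V(F) \setminus R(F)$ exist.

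Next I extract obstructions. For each $\sigma \colon R(F) \hookrightarrow V(G')$, since $\amp_p(F, G'; \sigma) = 0$, any maximum internally-disjoint subfamily of $\Inj(F, G'; \sigma)$ has size at most $p-1$; the union $V^*(\sigma) \subset V(G')$ of its non-root images has size at most $(p-1)(v(F) - \abs{R(F)})$ and meets $\eta(V(F) \setminus R(F))$ for every $\eta \in \Inj(F, G'; \sigma)$. Define $U(\eta) := \{v \in V(F) \setminus R(F) : \eta(v) \in V^*(\sigma)\} \neq \emptyset$. The ``fully-pinned'' case $U(\eta) = V(F) \setminus R(F)$ contributes at most $O_{F,p}(1)$ embeddings per $\sigma$, hence a total of $O_{F,p}(1) \cdot (n')^{\abs{R(F)}} = O_{F,p}(c'^{-e(F)}) \cdot n' d^{e(F)}$, which is lower order once $c'$ is large. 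Otherwise $U(\eta) \subsetneq V(F) \setminus R(F)$ is nonempty proper, and the obstruction-family hypothesis furnishes some $F_0 \in \F_0$ realized as a rooted subgraph of $(F, R(F) \cup U(\eta))$.

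The central counting step: after an outer pigeonhole on the finite data $(U, F_0, \phi \colon U \to V^*(\sigma))$, which has $O_{F, p, \F_0}(1)$ choices, the inequality $\inj(F, G'; \sigma) \leq O_{F, p, \F_0}(1) \cdot \inj(F, G'; \sigma \cup \phi)$ holds for some fixed choice of the data. Restricting each surviving embedding to $V(F_0)$ and then extending freely along the forest $F \setminus F_0$, which contributes $v(F) - v(F_0) = e(F) - e(F_0)$ new edges (one per new vertex, as $F$ is a tree), yields
\[
  \inj(F, G'; \sigma \cup \phi) \leq \inj(F_0, G'; \sigma_0) \cdot (Kd)^{v(F) - v(F_0)},
\]
where $\sigma_0 := (\sigma \cup \phi)|_{R(F_0)}$. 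The crucial structural observation is that if $\sigma \cup \phi$ admits $C_0$ extensions in $\Inj(F, G'; \sigma \cup \phi)$ with pairwise-disjoint images of $V(F) \setminus (R(F) \cup U)$, then restricting them to $V(F_0)$ witnesses $\sigma_0$ as $C_0$-ample for $F_0$; the corresponding contributions therefore aggregate into $\amp_{C_0}(F_0, G') \leq \eps n' d^{e(F_0)}$ by negligibility. Using the tree identity $e(F_0) + v(F) - v(F_0) = e(F)$ to collapse exponents, summing over all data and all $\sigma$, and taking $\eps$ small (hence $C_0$ large), the $C_0$-ample contribution totals at most $\tfrac{1}{4}\, n' d^{e(F)}$, contradicting the lower bound.

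The main obstacle will be the non-$C_0$-ample portion, where $\sigma \cup \phi$ fails to admit $C_0$ pairwise-disjoint extensions. A naive degree-budget estimate $\inj(F_0, G'; \sigma_0) \leq (Kd)^{v(F_0) - \abs{R(F_0)}}$, summed over $\sigma$, produces an excess factor of order $(n'/d)^{\abs{R(F) \cup U} - \abs{R(F_0)}}$ that is polynomial in $n'$ whenever $V(F_0)$ fails to span $R(F) \cup U$, which is typical (see \cref{fig:cut-tree}). I expect to address this by iterating the covering-set argument: from a non-$C_0$-ample $\sigma \cup \phi$, greedily peel off a further covering set inside $V(F) \setminus (R(F) \cup U)$, enlarge $U$ accordingly, and reapply the obstruction-family definition to the new enlarged root set. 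Each iteration either produces a $C_0$-ample configuration (controlled by negligibility) or pins an additional internal vertex; after at most $v(F) - \abs{R(F)}$ steps the embedding is fully pinned. With matching thresholds at each level, and using the specific constant $K = 5^{4/\alpha}$ to absorb the geometric factors, the telescoped non-ample contribution should also remain subdominant to $\tfrac{1}{2}\, n' d^{e(F)}$.
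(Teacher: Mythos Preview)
Your strategy is essentially the paper's: regularize, lower-bound $\inj(F,G')$, and iteratively pin internal vertices via the obstruction family. The paper, however, dissolves your ``main obstacle'' by a single reorganization. Instead of splitting into $C_0$-ample versus non-$C_0$-ample at every level of the iteration, it first removes \emph{all} extensions of $C_0$-ample obstruction-embeddings in one stroke, setting
\[
I := \Inj(F,G') \setminus \bigcup_{F_0\in\F_0} \Ext_{C_0}(F_0,F,G'),
\]
and bounding the deleted part globally by $\sum_{F_0}\inj(F_0,F)\,\amp_{C_0}(F_0,G')\,(Kd)^{e(F)-e(F_0)}\le\tfrac13 n'd^{e(F)}$ via negligibility and a suitable choice of $\eps$. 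Within $I$, no restriction of any embedding to any $F_0\in\F_0$ is $C_0$-ample, so your covering-set step applies \emph{uniformly at every level}: a clean backward induction on $|U|$ gives $|I_\sigma(\tau)|\le (C_0v(F)^2)^{v(F)-|R(F)|-|U|}$ for all $\tau\colon U\to V(G')$, hence $|I_\sigma|=O_{F,p}(1)$, contradicting $|I|\ge\tfrac12 n'd^{e(F)}$ by pigeonhole on $\sigma$. This is precisely your proposed iteration with the ample cases peeled off once rather than at every depth; the specific value of $K$ plays no role beyond being absorbed into the single $\eps$.

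One genuine soft spot in your write-up: the bound $\inj(F,G';\sigma\cup\phi)\le\inj(F_0,G';\sigma_0)\cdot(Kd)^{v(F)-v(F_0)}$ does not aggregate into $\amp_{C_0}(F_0,G')$ when summed over $\sigma$, because the map $\sigma\mapsto\sigma_0$ is far from injective and you have thrown away $\sigma|_{R(F)\setminus V(F_0)}$. The correct aggregation is simply that every $\eta\in\Inj(F,G';\sigma\cup\phi)$ in the ample case lies in $\Ext_{C_0}(F_0,F,G')$, so the sum over $\sigma$ is bounded by $\sum_{F_0}\ext_{C_0}(F_0,F,G')$ directly; this is exactly the subtraction the paper performs up front.
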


The negligibility lemma provides us a two-step strategy to establish \cref{conj:bc} for a balanced rooted tree $F$: first identifying an obstruction family $\F_0$ for $F$, and second certifying the negligibility of $\F_0$. Although in the first step there might be multiple obstruction families for $F$, heuristically speaking it makes more sense to choose $\F_0$ that is minimal under inclusion, because all the heavy lifting happens in the second step that certifies the negligibility of each member of $\F_0$.

Coming back to the tree $T_{s,t,s'}$ defined in \cref{fig:2-tree}, we choose the following obstruction family which is indeed minimal under inclusion.

\begin{proposition} \label{lem:obstruction-char}
  For every $s, t \in \N^+$ and $s' \in \N$, if $(t,s') \neq (1,0)$, then the family $\set{K_{1,s+1}} \cup \dset{T_{s,t-i,s'+i}}{1 \le i \le s - s'}$ is an obstruction family for $T_{s,t,s'}$.
\end{proposition}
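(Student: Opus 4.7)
The plan is to verify both requirements in \cref{def:obstruction} by a direct case analysis. Label the non-root vertices of $F = T_{s,t,s'}$ as the central vertex $v_0$ and the $t$ spider-centers $v_1, \ldots, v_t$, each $v_j$ being adjacent to $v_0$ and to $s$ root leaves; additionally, $v_0$ is adjacent to $s'$ extra root leaves. The first condition---each family member is isomorphic to a subtree of $F$ that is not a single edge---is immediate: $K_{1,s+1}$ appears as the star spanned by any $v_j$ together with its $s+1$ neighbors in $F$, while each $T_{s,t-i,s'+i}$ embeds in $F$ by sending its center to $v_0$, using $t-i$ of the spider-legs of $F$ verbatim, and mapping its $s'+i$ direct leaves to the $s'$ original direct leaves of $F$ together with $i$ of the remaining $v_j$'s (treated merely as leaves of the subtree). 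Every such tree has at least two edges; the lone single-edge degeneracy would require $(t,s')=(1,0)$, a case excluded by the balance of $F$ via \cref{lem:balance-condition}.

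For the main requirement, I would fix a nonempty proper subset $U \subseteq \{v_0, v_1, \ldots, v_t\}$, write $\widetilde F$ for the result of adjoining $U$ to the root set of $F$, and split on whether $v_0 \in U$. If $v_0 \in U$, then properness of $U$ yields some $v_j \notin U$, and the $s+1$ neighbors of $v_j$ in $F$ (namely $v_0$ and its $s$ attached leaves) are all roots of $\widetilde F$, furnishing a rooted copy of $K_{1,s+1}$ centered at $v_j$. If $v_0 \notin U$, set $k := \abs{U} \ge 1$, so $U$ is a set of $k$ spider-centers, and observe that $v_0$ has exactly $s'+k$ root neighbors in $\widetilde F$---the $s'$ original direct leaves plus the $k$ elements of $U$. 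In the subcase $s' + k \ge s + 1$, a rooted $K_{1,s+1}$ is produced by selecting $s+1$ of these root neighbors of $v_0$.

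In the remaining subcase $1 \le k \le s - s'$, the key observation is that $\widetilde F$ is \emph{literally} the rooted graph $T_{s, t-k, s'+k}$: $v_0$ plays the role of its center; the $t-k$ spider-centers $v_j \notin U$, together with their $s$ attached root leaves, play the role of its $t-k$ spider-legs; and the $s'+k$ root neighbors of $v_0$ enumerated above play the role of its direct leaves. Since $T_{s, t-k, s'+k}$ belongs to the family precisely when $1 \le k \le s - s'$, this exhausts all cases. The only delicate point is the boundary $k = t$ in this last subcase, where $T_{s, 0, s'+t}$ should be interpreted as the star $K_{1, s'+t}$; balance of $F$ ensures $s' + t \ge 2$, so this degenerate tree still satisfies the ``not a single edge'' requirement. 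No step here is a genuine obstacle---the work amounts to keeping the bookkeeping straight.
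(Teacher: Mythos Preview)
Your argument follows the paper's own case split (on whether the central vertex lies in $U$) and is in fact more thorough, since you explicitly verify the first clause of \cref{def:obstruction} and treat the degenerate boundary cases that the paper leaves implicit. One slip to fix: in the subcase $1 \le k \le s-s'$, the rooted graph $\widetilde F$ is \emph{not} literally $T_{s,t-k,s'+k}$, because the $k$ newly rooted spider-centers in $U$ still carry their $s$ pendant root leaves each, so $\widetilde F$ has $sk$ extra vertices and edges beyond $T_{s,t-k,s'+k}$. What you need---and what your explicit description of the embedding already establishes---is only that $\widetilde F$ \emph{contains} $T_{s,t-k,s'+k}$ as a rooted subgraph; replace ``is literally'' with ``contains as a rooted subgraph'' and the proof is correct and essentially identical to the paper's.
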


\begin{proof}
  Let $F = T_{s,t,s'}$, and let $U$ be a nonempty proper subset of $P \cup Q$, where $P$ and $Q$ are vertex subsets of $V(F)$ defined in \cref{fig:vertex-subsets}. Let $F_+$ be the rooted graph after adding $U$ to the root set $R(F)$ of $F$. If $U$ contains the vertex in $P$, then $F_+$ contains $K_{1,s+1}$ as a rooted subgraph. Otherwise $U \subseteq Q$. In this case, $F_+$ contains $T_{s,t-i,s'+i}$ as a rooted subgraph, where $i = \abs{U}$. Finally notice that when $s' + i \ge s + 1$, $T_{s,t-i,s'+i}$ contains $K_{1,s+1}$ as a rooted subgraph, and so does $F_+$ (see \cref{fig:cut-tree} for an example).
\end{proof}

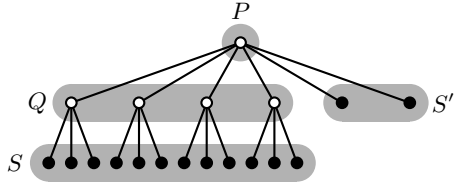
\begin{figure}
  \centering
  \begin{tikzpicture}[thick, scale=0.5]
    \fill[gray!60] (0,1.2) circle (0.5);
    \node at (0,2.05){\footnotesize $P$};

    \fill[gray!60] (2.7,-0.4) circle (0.5);
    \fill[gray!60] (4.5,-0.4) circle (0.5);
    \fill[gray!60] (4.5,-0.9)--(2.7,-0.9)--(2.7,0.1)--(4.5,0.1)-- cycle;
    \node at (5.4,-0.4){\footnotesize $S'$};

    \fill[gray!60] (.9,-0.4) circle (0.5);
    \fill[gray!60] (-4.5,-0.4) circle (0.5);
    \fill[gray!60] (-4.5,-0.9)--(.9,-0.9)--(.9,0.1)--(-4.5,0.1)-- cycle;
    \node at (-5.4,-0.4){\footnotesize $Q$};

    \fill[gray!60] (-5.1,-2) circle (0.5);
    \fill[gray!60] (1.5,-2) circle (0.5);
    \fill[gray!60] (-5.1,-2.5)--(1.5,-2.5)--(1.5,-1.5)--(-5.1,-1.5)-- cycle;
    \node at (-6,-2) {\footnotesize $S$};
    
    \draw (-4.5,-.4) -- (-5.1,-2) node[root]{};
    \draw (-4.5,-.4) -- (-4.5,-2) node[root]{};
    \draw (-4.5,-.4) -- (-3.9,-2) node[root]{};
    \draw (0,1.2) -- (-4.5,-.4) node[vertex]{};

    \draw (-2.7,-.4) -- (-3.3,-2) node[root]{};
    \draw (-2.7,-.4) -- (-2.7,-2) node[root]{};
    \draw (-2.7,-.4) -- (-2.1,-2) node[root]{};
    \draw (0,1.2) -- (-2.7,-.4) node[vertex]{};

    \draw (-.9,-.4) -- (-1.5,-2) node[root]{};
    \draw (-.9,-.4) -- (-.9,-2) node[root]{};
    \draw (-.9,-.4) -- (-.3,-2) node[root]{};
    \draw (0,1.2) -- (-.9,-.4) node[vertex]{};

    \draw (.9,-.4) -- (.3,-2) node[root]{};
    \draw (.9,-.4) -- (.9,-2) node[root]{};
    \draw (.9,-.4) -- (1.5,-2) node[root]{};
    \draw (0,1.2) -- (.9,-.4) node[vertex]{};

    \draw (0,1.2) -- (2.7,-.4) node[root]{};
    \draw (0,1.2) node[vertex]{} -- (4.5,-.4) node[root]{};
  \end{tikzpicture}
  \caption{Vertex partition of $T_{s,t,s'}$.} \label{fig:vertex-subsets}
\end{figure}

\cref{thm:main} follows immediately from the next theorem which certifies the negligibility of the obstruction family defined in \cref{lem:obstruction-char}.

\begin{theorem} \label{thm:neg}
  For every $s,t \in \N^+$ and $s' \in \N$, when $s - s' \ge 2$, assume in addition that
  \begin{equation} \label{eqn:t-cond}
    t \ge \left(1-\tfrac{s'}{s+1}\right)k\left(k-\tfrac{1}{s}\right)(s+2-k)+\tfrac{1}{s}, \quad\text{for every } 2 \le k \le s-s'.
  \end{equation}
  If $T := T_{s,t,s'}$ is balanced, then for every $p \in \N^+$ and $\eps > 0$, there exists $c_0 > 0$ such that the following holds.
  For every $c > c_0$ and every $n$-vertex graph $G$ with $n \ge n_0(c)$, if every vertex in $G$ has degree between $d$ and $Kd$, where $d = cn^\alpha$, $K = 5^{4/\alpha}$ and $\alpha = 1 - 1/\rho_T$, and moreover $\amp_p(T, G) = 0$, then
  \begin{enumerate}[label=(\alph*)]
    \item $\amp_{C_*}(K_{1,s+1}, G) \le \eps nd^{e(K_{1,s+1})}$, where $C_* = v(T_{s,t,s+1}^p)$; and \label{item:neg-1}
    \item $\amp_{C_k}(F_k, G) \le \eps nd^{e(F_k)}$, where $C_k = pv(T)^k$ and $F_k = T_{s,t-k,s'+k}$, for every $1 \le k \le s - s'$. \label{item:neg-2}
  \end{enumerate}
\end{theorem}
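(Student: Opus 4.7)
The plan is to prove parts (a) and (b) in the two separate sections \cref{sec:star} and \cref{sec:tree} respectively, each by contradiction from the hypothesis $\amp_p(T, G) = 0$. The common template is: assume the claimed bound on $\amp_C(F_0, G)$ fails for one of the obstructions $F_0$; leverage the abundance of $C$-ample embeddings of $F_0$ to build a $p$-ample embedding of $T$ inside $G$; contradiction.

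\textbf{Part (a), the star obstruction.} A $C_*$-ample copy of $K_{1,s+1}$ records an $(s+1)$-tuple $(u_0, \ldots, u_s)$ of vertices of $G$ whose common neighborhood $N$ has size at least $C_* = v(T_{s,t,s+1}^p)$. I would interpret one of the leaves, say $u_0$, as an image of the $P$-vertex and the remaining $u_1, \ldots, u_s$ as the $s$ $S$-leaves sitting below a single $Q$-position; the vertices of $N$ then serve as a reservoir of $Q$-vertex candidates. Using the near-regularity of $G$ and a pigeonholing over the many ample stars, I would collect enough candidate $Q$-vertices (adjacent to $u_0$) with enough associated $s$-tuples of $S$-leaves to greedily construct a copy of $T_{s,t,s+1}^p$ entirely inside the neighborhood of $u_0$; the choice $C_* = v(T_{s,t,s+1}^p)$ is exactly what accommodates this greedy embedding. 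Because the balance condition (\cref{lem:balance-condition}) forces $s' \le s+1$, an embedding of $T_{s,t,s+1}^p$ contains an embedding of $T^p = T_{s,t,s'}^p$ (by discarding $s+1-s'$ leaves at $P$), which yields a $p$-ample embedding of $T$, the desired contradiction.

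\textbf{Part (b), the tree obstructions.} I would use downward induction on $i$, from $i = s - s'$ down to $i = 1$, with the forbidden $p$-ample $T$ playing the role of the $i=0$ case. Assume $\amp_{C_i}(F_i, G) > \eps n d^{e(F_i)}$, where $C_i = p v(T)^i$. A $C_i$-ample $F_i$-embedding carries $C_i$ disjoint realizations of its non-root skeleton (one $P$-image and $t-i$ $Q$-images). Pigeonholing on the $P$-image, I would extract many ample $F_i$ skeletons sharing a common $P$-vertex $v_P$, then attempt to ``promote'' one of the $s'+i$ leaves hanging off $v_P$ into a genuine $Q$-vertex by attaching $s$ fresh leaves from $N(v_P)$; this produces a $C_{i-1}$-ample $F_{i-1}$-embedding, where the ratio $C_i / C_{i-1} = v(T)$ provides exactly the slack needed to maintain vertex-disjointness across the $p$ parallel skeletons. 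At each promotion step, part (a) is invoked to rule out the bad event that the newly attached $s$ leaves form an $(s+1)$-tuple with $v_P$ whose common neighborhood is too large — such a configuration would already produce a $p$-ample $T$ via the construction in (a).

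\textbf{Main obstacle.} The heart of the argument is the inductive step in (b): at the $k$-th iteration, one must find a fresh $Q$-vertex for each of the $p$ parallel skeletons in the neighborhood of $v_P$ while avoiding the vertices already occupied by earlier promotions and by the other skeletons. Counting the available room in $N(v_P)$ against the combinatorial cost of the $k$-th extension, and requiring the leftover degree to still support a further promotion, is exactly what \cref{eqn:t-cond} encodes: the quadratic-in-$k$ expression on the right-hand side tracks, for each level $2 \le k \le s-s'$, the tightest lower bound on $t$ that keeps the recursion alive. When $s - s' \le 1$ no genuine iteration is needed and the condition is vacuous; when $s - s' \ge 2$, the simplest uniform consequence of \cref{eqn:t-cond} is the bound $t \ge s^3 - 1$ that appears in \cref{thm:main}, and I expect the proof to show that this threshold is essentially tight for the induction to close.
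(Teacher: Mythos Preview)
Your plan has the right skeleton (assume too many ample obstructions, build a $p$-ample $T$), but both parts miss the actual mechanisms and contain real gaps.

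For part (a), a single $C_*$-ample star only gives you one $(s{+}1)$-tuple with large common neighbourhood; to embed $T^p$ you need $pt$ such tuples that fit together (one per $Q$-vertex), all sharing their $P$-coordinates with a common $s_0$-set $U_0$ playing the role of $S'$. The paper does not do this by greedy construction. It encodes the ample stars as an $(s{+}1)$-uniform hypergraph $H$, shows via Erd\H{o}s's degenerate hypergraph Tur\'an theorem that $H[N_G(U_0)]$ must be sparse for every $U_0$ (otherwise the forbidden $(s{+}1)$-partite pattern $H_0$ appears and yields $T^p$), and then uses Jensen plus double counting over all $U_0$ to convert ``$H$ is locally sparse'' into $\amp_{C_*}(K_{1,s+1})\le \eps\,\inj(K_{1,s+1})$. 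Your pigeonholing-plus-greedy sketch does not produce the required $H_0$ and would not give the decoupled $\eps$--$C_*$ dependence the paper needs later.

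For part (b), the induction in the paper runs \emph{upward} in $i$: one proves the bound for $F_k$ assuming the bounds for $F_1,\dots,F_{k-1}$ and for $K_{1,s+1}$; the authors flag this as counterintuitive precisely because $F_k\subset F_{k-1}$. Your ``promotion'' step (turn an $S'$-leaf into a $Q$-vertex by attaching $s$ fresh leaves) does not produce a $C_{i-1}$-ample $F_{i-1}$: the root set changes (the promoted vertex becomes a non-root, its new leaves become roots), so ampleness does not transfer in the way you describe; also ``pigeonholing on the $P$-image'' is ill-posed because $P$ is a non-root and the $C_i$ witnessing embeddings have pairwise distinct $P$-images. The paper's argument is quite different: for fixed $\sigma$ it builds a bipartite graph $\wths$ between the $v_0$-images and the $(v_1,\dots,v_k)$-sequences, splits it by degree into $\hs^-$ and $\hs^+$, bounds $\hs^-$ via the sequential sunflower lemma (a sunflower with kernel $I$ of size $i$ yields a $C_i$-ample $F_i$, invoking the inductive hypothesis), and bounds $\hs^+$ via K\H{o}v\'ari--S\'os--Tur\'an when $k=1$ and via an iterated dependent-random-choice estimate when $k\ge 2$. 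Condition~\cref{eqn:t-cond} is exactly the inequality that makes the iterated DRC bound on $\hs^+$ dominated by $m_0 n^\alpha d^{k-1}$ with the specific choice $m_0=\lfloor n^{(s-(s+1)\alpha)k}\rfloor$; it does not come from a room-counting in $N(v_P)$.
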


\begin{proof}[Proof of \cref{thm:main}]
  Suppose that $T := T_{s,t,s'}$ is balanced. When $s \le s'$, the obstruction family for $T$ consists of a single $K_{1,s+1}$, which by \cref{thm:neg}\ref{item:neg-1} is negligible for $T$. When $s - s' = 1$, in view of \cref{thm:neg}, the obstruction family $\F_0$ defined in \cref{lem:obstruction-char} is also negligible. When $s - s' \ge 2$, $\F_0$ is negligible provided \cref{eqn:t-cond}. Observe that $t \ge s^3 - 1$ ensures \cref{eqn:t-cond}. Indeed, the right hand side of \cref{eqn:t-cond} is at most $k^2(s+2-k)+1/s$, which, by the inequality of arithmetic and geometric means, is at most $(2(s+2)/3)^3/2+1/s$, which is at most $s^3-1$ for $s \ge 3$. One can check directly in case $s = 2$ that the right hand side of \cref{eqn:t-cond} is less than $7$. In any case, it then follows from \cref{lem:master} that $\ex(n, T^p) = O(n^{2-1/\rho_F})$ for all $p \in \N^+$.
\end{proof}

Our proof of \cref{thm:neg} is inductive in nature. In \cref{sec:star} we first establish the negligibility of $K_{1,s+1}$ in \cref{thm:neg}\ref{item:neg-1}. In \cref{sec:tree} we deduce the negligibility of $F_k$ in \cref{thm:neg}\ref{item:neg-2} from that of $K_{1,s+1}$ and $F_1, \dots, F_{k-1}$. The inductive pattern here is counterintuitive in the sense that the negligibility of $F_k$, which is a subgraph of $F_{k-1}$, comes after that of $F_{k-1}$.

\section{Proof of the negligibility lemma} \label{sec:main}

In \cref{sec:idea}, we have analyzed the special case where the graph $G$ is regular. In the context of degenerate extremal graph theory, it is indeed standard to assume that $G$ is almost regular. This idea due to Erd\H{o}s and Simonovits first appeared in \cite{ES70}. We shall use the following variant (see also \cite[Proposition~2.7]{JS12} for a similar result).

\begin{lemma}[Theorem 12 of Bukh and Jiang \cite{BJ17}, only in arXiv version] \label{lem:folk-reg}
  For every $c > 0$ and $\alpha \in (0,1]$, there exists $\tilde{n}_0 \in \N$ such that the following holds. Every $\tilde{n}$-vertex graph with $\tilde{n} \ge \tilde{n}_0$ and at least $(6c/\alpha)\tilde{n}^{1+\alpha}$ edges contains an $n$-vertex subgraph $G$ with $n \ge (6c/\alpha)\tilde{n}^{\alpha/2}$ such that every vertex in $G$ has degree between $cn^\alpha$ and $Kcn^\alpha$, where $K = 5^{4/\alpha}$. \qed
\end{lemma}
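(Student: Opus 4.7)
The plan is to establish the lemma by a two-stage pruning argument in the spirit of Erd\H{o}s--Simonovits. The first stage enforces a minimum-degree lower bound on a subgraph of $\tilde G$, and the second stage carves out from it a near-regular subgraph by isolating a cluster of vertices with similar degrees.

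For Stage~1, I take the threshold $d_0 := (3c/\alpha)\tilde n^\alpha$ and iteratively delete from the current graph (starting at $\tilde G$) any vertex whose current degree is below $d_0$. Since at most $\tilde n$ vertices are ever deleted and each deletion removes fewer than $d_0$ edges, the total edges lost do not exceed $d_0 \tilde n = (3c/\alpha)\tilde n^{1+\alpha}$, which is at most half of the edges of $\tilde G$. Hence the process terminates at a non-empty subgraph $\tilde H$ with $e(\tilde H) \ge (3c/\alpha)\tilde n^{1+\alpha}$ and minimum degree at least $d_0$.

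For Stage~2, I partition $V(\tilde H)$ into geometric degree classes $V_j := \{v \in V(\tilde H) \colon R^j d_0 \le \deg_{\tilde H}(v) < R^{j+1} d_0\}$ for a small fixed ratio $R > 1$, yielding $O(\log_R \tilde n)$ classes. Pigeonholing the edge mass $\sum_v \deg_{\tilde H}(v) = 2e(\tilde H)$ across these classes produces some $V_{j^\star}$ carrying a $1/O(\log_R \tilde n)$ share of the incident edges. I restrict to $\tilde H[V_{j^\star}]$ and perform one more low-degree prune with threshold $c n^\alpha$ (where $n$ is the vertex count at that moment), obtaining the desired subgraph $G$: by construction, every vertex of $G$ has degree at least $c n^\alpha$ and at most $R^{j^\star+1} d_0$, and a careful calibration of $R$ together with the slack between $d_0$ and $c n^\alpha$ produces a ratio of at most $K = 5^{4/\alpha}$.

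Finally, the vertex-count bound $n \ge (6c/\alpha)\tilde n^{\alpha/2}$ is verified from the preserved edge count: one has $e(G) = \Omega(\tilde n^{1+\alpha}/\log \tilde n)$ on the one hand and $2e(G) \le K c n^{1+\alpha}$ on the other, forcing $n^{1+\alpha}$ to be at least $\tilde n^{1+\alpha}/(K \log \tilde n)$ up to constants, which comfortably exceeds $((6c/\alpha)\tilde n^{\alpha/2})^{1+\alpha}$ once $\tilde n \ge \tilde n_0(c,\alpha)$. The principal obstacle is the joint numerical calibration of $R$, the threshold $d_0$, and the final pruning so that the three quantitative constraints (minimum degree $\ge cn^\alpha$, degree ratio $\le 5^{4/\alpha}$, and $n \ge (6c/\alpha)\tilde n^{\alpha/2}$) fit together with the precise constants stated; this routine but delicate bookkeeping is what dictates the somewhat idiosyncratic constants $6c/\alpha$ and $5^{4/\alpha}$ appearing in the lemma.
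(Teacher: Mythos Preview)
The paper does not supply its own proof of this lemma; it is quoted as Theorem~12 of Bukh--Jiang and used as a black box in the proof of \cref{lem:master}. Evaluating your argument on its own merits, Stage~2 has a genuine gap. Pigeonholing the total degree $\sum_{v\in V_j}\deg_{\tilde H}(v)$ over the classes $V_j$ indeed produces some $V_{j^\star}$ whose vertices are incident to a $1/O(\log_R\tilde n)$ share of the edges of $\tilde H$, but those edges can almost all leave $V_{j^\star}$: nothing forces the \emph{induced} subgraph $\tilde H[V_{j^\star}]$ to retain a comparable number of edges (think of $\tilde H$ bipartite with $V_{j^\star}$ equal to one side --- then $\tilde H[V_{j^\star}]$ is empty). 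Consequently the second low-degree prune you apply to $\tilde H[V_{j^\star}]$ may delete every vertex, and neither the edge bound $e(G)=\Omega(\tilde n^{1+\alpha}/\log\tilde n)$ nor the vertex-count bound $n\ge(6c/\alpha)\tilde n^{\alpha/2}$ can be deduced.

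The regularization lemmas in this family (Erd\H{o}s--Simonovits, Jiang--Seiver, and the Bukh--Jiang version cited here) avoid this trap by replacing the single pigeonhole with an \emph{iterative} density-boosting step: after pruning to minimum degree a constant fraction of the average, whenever the maximum degree is still too large one passes to a strictly smaller subgraph whose ratio $e/n^{1+\alpha}$ does not decrease, and the process terminates because $n$ is strictly dropping. Alternatively you could pigeonhole over \emph{pairs} of classes (the bipartite graphs $\tilde H[V_i,V_j]$) so that edges are genuinely retained, but then the two sides sit at different degree scales and a further argument is needed to extract a single almost-regular subgraph with the stated ratio $5^{4/\alpha}$.
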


We now formalize the discussion in \cref{sec:idea} on finding a copy of $F^p$ in $G$.

\begin{definition}[Extension] \label{def:extension}
  Given two trees $F_1, F_2$ and a graph $G$, for $\eta_1 \in \Inj(F_1, G)$ and $\eta_2 \in \Inj(F_2, G)$, we say $\eta_2$ \emph{extends} $\eta_1$ if $\eta_1 = \eta_2 \circ \eta_{12}$ for some embedding $\eta_{12}\in\Inj(F_1, F_2)$. Given $C \in \N$, denote $$\Ext_C(F_1, F_2, G) = \dset{\eta_2 \in \Inj(F_2, G)}{\eta_2 \text{ extends some }\eta_1 \in \Amp_C(F_1, G)}.$$
\end{definition}

\begin{proof}[Proof of \cref{lem:master}]
  Suppose that $F$ is a tree, $p \in \N^+$ and $\F_0$ is a negligible obstruction family for $F$. Let $c > 0$ be a constant to be determined later. We would like prove that $\ex(\tilde{n}, F^p) < (6/\alpha)c\tilde{n}^{1+\alpha}$ for all $\tilde{n} \ge \tilde{n}_0(c)$, where $\alpha = 1 - 1/\rho_F$. By \cref{lem:folk-reg}, it suffices to prove that every $n$-vertex graph $G$ with $n \ge n_0(c)$, if every vertex in $G$ has degree between $cn^\alpha$ and $Kcn^\alpha$, where $K = 5^{4/\alpha}$, then $G$ contains $F^p$ as a subgraph.
  
  Suppose that $G$ is an $n$-vertex graph with $n \ge n_0(c)$ such that every vertex in $G$ has degree between $d$ and $Kd$, where $d = cn^\alpha$. For the sake of contradiction, we assume that $\amp_p(F, G) = 0$. With hindsight, take
  $$
    \eps = \frac{K^{-e(F)}}{3\sum_{F_0 \in \F_0}\inj(F_0, F)}.
  $$
  Unwinding \cref{def:neg}, we obtain two constants $c_{F_0} > 0$ and $C_{F_0} \in \N$ for every $F_0 \in \F_0$. If we had chosen $c \ge \max \dset{c_{F_0}}{F_0 \in \F_0}$, then for every $F_0 \in \F_0$, $\amp_{C_{F_0}}(F_0, G) \le \eps n d^{e(F_0)}$, and in particular, $\amp_{C_0}(F_0, G) \le \eps n d^{e(F_0)}$, where $C_0 = \max(\dset{C_{F_0}}{F_0\in \F_0} \cup \set{p})$.

  Consider the embeddings in
  \begin{equation} \label{eqn:exclude-ext}
    I := \Inj(F, G) \setminus \bigcup_{F_0 \in \F_0}\Ext_{C_0}(F_0, F, G).
  \end{equation}
  Clearly $\inj(F, G) \ge (1-o(1))nd^{e(F)}$, and moreover for every $F_0 \in \F_0$,
  $$
    \ext_{C_0}(F_0, F, G) \le \inj(F_0, F)\amp_{C_0}(F_0, G)(Kd)^{e(F)-e(F_0)} \le \eps \inj(F_0, F) K^{e(F)} nd^{e(F)}.
  $$
  We can estimate the cardinality of $I$ by
  $$
    \abs{I} \ge \left(1-o(1)\right)nd^{e(F)} - \eps\sum_{F_0 \in \F_0}\inj(F_0, F)K^{e(F)}nd^{e(F)} = (2/3-o(1)) nd^{e(F)},
  $$
  and so $\abs{I} \ge nd^{e(F)}/2 = c^{e(F)}n^{1 + e(F)(1-1/\rho_F)}/2 = c^{e(F)}n^{\abs{R(F)}}/2$ if we had chosen $n_0(c)$ large enough.
  
  By the pigeonhole principle, the cardinality of $I_\sigma := I \cap \Inj(F, G; \sigma)$ is at least $c^{e(F)}/2$ for some $\sigma \colon R(F) \to V(G)$. For every $U \subseteq V(F) \setminus R(F)$ and every injection $\tau\colon U\to V(G)$, set $$I_\sigma(\tau) = \dset{\eta \in I_\sigma}{\eta(u) = \tau(u) \text{ for every }u \in U}.$$
  
  \begin{claim*}
    For every $U \subseteq V(F)\setminus R(F)$ and $\tau\colon U\to V(G)$, $$\abs{I_\sigma(\tau)} \le (C_0v(F)^2)^{v(F) - \abs{R(F)} - \abs{U}}.$$
  \end{claim*}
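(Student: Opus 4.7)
The plan is to prove the claim by induction on $k := v(F) - \abs{R(F)} - \abs{U}$. The base case $k = 0$ is immediate: with $U = V(F) \setminus R(F)$, the injection $\eta$ is completely specified by $\sigma$ and $\tau$, so $\abs{I_\sigma(\tau)} \le 1 = (C_0 v(F)^2)^0$.

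For the inductive step with $k \ge 1$, the aim is to identify a rooted subgraph of $F$ (with $U$ added to its root set, in the nonempty case) whose induced root assignment in $G$ fails to be $C_0$-ample, and then to invoke a greedy disjoint-embedding argument. Specifically, if $U = \emptyset$, I would take $F_0 := F$ and $\iota := \mathrm{id}$; the hypothesis $\amp_p(F, G) = 0$ together with $C_0 \ge p$ forces $\sigma$ (viewed as a root assignment for $F$) not to be $C_0$-ample. Otherwise (so $U$ is nonempty and proper), invoke the obstruction family property on $U$ to obtain $F_0 \in \F_0$ together with an embedding $\iota\colon V(F_0) \to V(F)$ satisfying $\iota(v) \in R(F) \cup U$ iff $v \in R(F_0)$. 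Let $\sigma_0 \colon R(F_0) \to V(G)$ be the root assignment pulled back from $\sigma$ and $\tau$ via $\iota$. For every $\eta \in I_\sigma(\tau)$, the composition $\eta \circ \iota$ extends $\sigma_0$, and the defining property of $I$ (excluding $\Ext_{C_0}(F_0, F, G)$) guarantees that $\eta \circ \iota$ is not $C_0$-ample; hence $\sigma_0$ admits no $C_0$ pairwise non-root-image-disjoint extensions.

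Fix a maximal such disjoint family $\eta_0^{(1)}, \ldots, \eta_0^{(j)}$ with $j \le C_0 - 1$, and set $B := \bigcup_i \eta_0^{(i)}(V(F_0) \setminus R(F_0))$, so that $\abs{B} \le (C_0 - 1)(v(F_0) - \abs{R(F_0)})$. By maximality, every extension of $\sigma_0$ has non-root image meeting $B$; in particular, for every $\eta \in I_\sigma(\tau)$ there exists some $u \in V(F_0) \setminus R(F_0)$ with $\eta(\iota(u)) \in B$. A union bound over the pairs $(u, b) \in (V(F_0) \setminus R(F_0)) \times B$, combined with the inductive hypothesis applied to the enlarged constraint set $U \cup \{\iota(u)\}$, then yields
\[
  \abs{I_\sigma(\tau)} \le \sum_{u, b} \abs{I_\sigma(\tau \cup \{\iota(u) \mapsto b\})} \le (v(F_0) - \abs{R(F_0)}) \cdot \abs{B} \cdot (C_0 v(F)^2)^{k-1} \le (C_0 v(F)^2)^{k}.
\]

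The argument is fairly clean, and I do not anticipate a serious obstacle. The only mild subtlety is handling the $U = \emptyset$ case uniformly, which is managed by using $F$ itself as the rooted subgraph of last resort and invoking the global non-$C_0$-ampleness that follows from $\amp_p(F, G) = 0$.
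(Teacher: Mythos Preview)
Your proposal is correct and follows essentially the same approach as the paper: backward induction on $\abs{U}$, using the obstruction family (or $F$ itself when $U=\varnothing$) to locate a rooted subtree $F_0$, bounding a maximal disjoint family by $C_0-1$, and then applying a union bound over pairs $(u,b)$ together with the inductive hypothesis. The only cosmetic difference is that you take the maximal disjoint family among all embeddings of $F_0$ extending $\sigma_0$, whereas the paper takes it among the restrictions to $V(F_0)$ of embeddings in $I_\sigma(\tau)$; both choices yield the same covering property and the same arithmetic.
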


  \begin{claimproof}
    We prove by backward induction on $\abs{U}$. Clearly $\abs{I_\sigma(\tau)} \le 1$ when the domain $U$ of $\tau$ equals $V(F) \setminus R(F)$. Suppose $U$ is a proper subset of $V(F) \setminus R(F)$. Recall from \cref{def:obstruction} that after adding $U$ to the root set of $F$, the resulting rooted graph contains $F_0$ as a rooted subgraph that is isomorphic to a member of $\F_0$. Notice that $U_0 := V(F_0) \setminus R(F_0)$ is nonempty because $F_0$ is not a single edge.
    
    Let $I_\sigma'(\tau)$ be a maximal subset of $I_\sigma(\tau)$ such that the images of $U_0$ under the embeddings in $I'_\sigma(\tau)$ are pairwise disjoint, and let $V_0$ be the union of these images. Since $I_\sigma(\tau) \subseteq I$ and $I$ defined by \cref{eqn:exclude-ext} contains no extension of any $C_0$-ample embedding from $F_0$ to $G$, we bound $\abs{I'_\sigma(\tau)} < C_0$, which implies that $\abs{V_0} < C_0\abs{U_0}$. For each $u \in U_0$ and $v \in V_0$, by the inductive hypothesis $$
      \abs{I_\sigma(\tau_{uv})} < (C_0v(F)^2)^{v(F)-\abs{R(F)}-\abs{U}-1},
    $$
    where $\tau_{uv}\colon U \cup \set{u} \to V(G)$ extends $\tau$ by mapping $u$ to $v$ additionally. The maximality of $I'_\sigma(\tau)$ means that for every $\eta \in I_\sigma(\tau)$ there is $u \in U_0$ such that $\eta(u) \in V_0$, and so $\eta \in I_\sigma(\tau_{uv})$ for some $v \in V_0$. Therefore $$\abs{I_\sigma(\tau)} \le \sum_{u\in U_0, v\in V_0}\abs{I_\sigma(\tau_{uv})} < \abs{U_0}\abs{V_0}(C_0v(F)^2)^{v(F)-\abs{R(F)}-\abs{U}-1},$$
    which implies the inductive step as $\abs{U_0} < v(F)$ and $\abs{V_0} < C_0\abs{U_0}$.

    The same argument works for the last inductive step where $U = \varnothing$ because there is no $p$-ample embedding from $F$ to $G$, and $C_0 \ge p$.
  \end{claimproof}
  
  In particular, $I_\sigma = I_\sigma(\tau)$ when the domain of $\tau$ is an empty set, and so $\abs{I_\sigma} \le (C_0v(F)^2)^{v(F)-\abs{R(F)}}$, which would yield a contradiction if we had chosen $c > (2(C_0v(F)^2)^{v(F)-\abs{R(F)}})^{1/e(F)}$.
\end{proof}

\section{Ample embeddings of stars} \label{sec:star}

The negligibility of $K_{1,s+1}$ for $T_{s,t,s'}$ is established directly through the following technical lemma.

\begin{lemma} \label{lem:ample-star}
  For every $s, t \in \N^+$ and $s' \in \N$, set $s_0 = \max(s', 1)$, $F_0 = K_{1,s_0}$, $F_1 = K_{1,s+1}$ and $T = T_{s,t,s'}$. For every $p \in \N^+$ and $\eps > 0$, there exists $c_0 > 0$ such that  for every $n$-vertex graph $G$, if $\amp_p(T, G) = 0$ and $\inj(F_0, G) \ge c_0n^{s_0}$, then $\amp_{C_1}(F_1, G) \le \eps\inj(F_1, G)$, where $C_1 = v(T_{s,t,s_0}^p)$.
\end{lemma}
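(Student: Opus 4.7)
The plan is to argue by contradiction. Assume $\amp_{C_1}(F_1, G) > \eps \inj(F_1, G)$ alongside $\inj(F_0, G) \ge c_0 n^{s_0}$; I will construct a $p$-ample embedding of $T = T_{s,t,s'}$ in $G$, contradicting $\amp_p(T, G) = 0$. Call an $(s+1)$-set $S \subseteq V(G)$ \emph{rich} if $\abs{N_G(S)} \ge C_1$, and write $\mathcal{R}$ for the family of rich sets. Since a $C_1$-ample embedding of $F_1 = K_{1,s+1}$ is exactly a pair $(S, v)$ with $S \in \mathcal{R}$ and $v \in N_G(S)$, the ratio assumption unpacks as $\sum_{S \in \mathcal{R}} \abs{N_G(S)} > \eps \sum_v \binom{d(v)}{s+1}$. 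Each rich $S$ functions as a template for a middle-vertex piece of $T$: singling out any $\alpha \in S$ to play the role of the top vertex of $T$, the remaining $s$ vertices of $S$ serve as the leaf set $L_i$ of a middle vertex $m_i$, and any of the $\ge C_1$ common neighbors of $S$ can play the role of $m_i$.

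The construction proceeds in three steps. First, by averaging $\abs{N_G(L)}$ over $s_0$-sets $L$, the hypothesis $\inj(F_0, G) \ge c_0 n^{s_0}$ with $c_0$ sufficiently large guarantees an $s'$-set $L_0 \subseteq V(G)$ with $\abs{N_G(L_0)}$ at least a large constant $M = M(p, s, t, \eps)$; this $L_0$ becomes the $s'$ direct leaves of $\alpha$ in $T$, with the case $s' = 0$ being trivial ($L_0 = \varnothing$). Second, find $p$ distinct pivots $\alpha^{(1)}, \dots, \alpha^{(p)} \in N_G(L_0)$ and $t$ pairwise disjoint $s$-sets $L_1, \dots, L_t \subseteq V(G) \setminus L_0$ such that $L_i \cup \{\alpha^{(j)}\} \in \mathcal{R}$ for every $(i, j) \in [t] \times [p]$. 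Third, for each such $(i, j)$, pick a distinct middle vertex $m_i^{(j)} \in N_G(L_i \cup \{\alpha^{(j)}\})$ that avoids all vertices chosen before; this is possible because each such common neighborhood has size at least $C_1 = v(T_{s,t,s_0}^p)$, while the running total of already-chosen vertices stays strictly below $C_1$ (bounded by $s' + ts + p + (pt - 1)$). The resulting configuration is a $p$-ample embedding of $T$, the desired contradiction.

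The hard part is the second step: coordinating the selection so that a \emph{single} family $L_1, \dots, L_t$ simultaneously witnesses the middle-leaf role for all $p$ pivots $\alpha^{(j)}$. Viewing $\{(\alpha, L) \in V(G) \times \binom{V(G)}{s} : L \cup \{\alpha\} \in \mathcal{R}\}$ as the edge set of a bipartite (hyper)graph, the ratio hypothesis — combined with the first step, which concentrates enough edge mass on the left-side set $N_G(L_0)$ — should provide a lower bound on the edge count strong enough to force a $K_{p,t}$-type biclique configuration via a greedy or Kővári–Sós–Turán-style counting argument. Pairwise disjointness of the $L_i$'s ought to be a mild constraint, since any fixed vertex lies in only $O(n^{s-1})$ of the candidate $s$-sets, which becomes negligible once $c_0$ is taken large enough (depending on $p, \eps$, and the tree parameters) to inflate the edge count appropriately.
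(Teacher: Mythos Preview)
Your outline has a genuine gap in Step 2, and it is not just a matter of filling in details.

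In Step 1 you choose $L_0$ solely to make $|N_G(L_0)|$ large; nothing in that choice relates $L_0$ to the family $\mathcal{R}$ of rich $(s+1)$-sets. Your bipartite object in Step 2 has edges $(\alpha,L)$ with $L\cup\{\alpha\}\in\mathcal{R}$, and you need many such edges with $\alpha\in N_G(L_0)$. But the ratio hypothesis is global, and restricting $\alpha$ to $N_G(L_0)$ can kill all of them: nothing prevents the rich sets from living in a part of $G$ disjoint from the $K_{1,s_0}$'s that witness $\inj(F_0,G)\ge c_0 n^{s_0}$. The sentence ``the first step \dots concentrates enough edge mass on the left-side set $N_G(L_0)$'' is exactly the unproved assertion. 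Even globally the edge count of your bipartite graph is only $(s+1)|\mathcal R|$ --- each rich set splits into a pivot and an $s$-set in $s+1$ ways --- which against a right side of size $\binom{n}{s}$ is far too sparse for a K\H{o}v\'ari--S\'os--Tur\'an argument to produce a $K_{p,t}$.

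The paper's proof repairs precisely this disconnect by changing what sits inside the common neighbourhood. Rather than asking only the pivot $\alpha$ to lie in $N_G(U_0)$, it asks the \emph{entire} rich set $S$ to lie there; equivalently, $U_0$ ranges over $s_0$-subsets of $N_G(S)$. Now richness is doing real work: since $|N_G(S)|\ge C_1$, each rich set is counted $\binom{|N_G(S)|}{s_0}$ times, and this sum controls $\amp_{C_1}(F_1)$ from above. On the other side, the forbidden configuration inside $N_G(U_0)$ is an $(s+1)$-partite $(s+1)$-uniform hypergraph $H_0$ encoding the neighbourhoods of the middle vertices of $T_{s,t,s_0}^p$; Erd\H{o}s's hypergraph Tur\'an bound then forces $e(H[N_G(U_0)])\le \tfrac{\eps}{4s_0^{s_0}}\binom{|N_G(U_0)|}{s+1}$ whenever $|N_G(U_0)|$ is large, and summing over $U_0$ (with Jensen to handle the small-$|N_G(U_0)|$ terms, which is where $\inj(F_0,G)\ge c_0 n^{s_0}$ enters) compares against $\inj(F_1)$. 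In short: your pivot-only constraint does not exploit $|N_G(S)|\ge C_1$, whereas the paper's whole-set constraint does, and that is what makes the double count go through.
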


Our proof of \cref{lem:ample-star} follows the outline of \cite[Lemma~5.3]{CJL19}. Over there the conclusion, in our language, is that for every $\eps > 0$ there exists $C_1 \in \N$ such that $\amp_{C_1}(F_1, G) \le \eps\inj(F_1, G)$. One can work out the quantitative dependency $C_1 = \Omega(\eps^{-1/(s-1)})$ from their argument. Although this dependency alone is enough for the negligibility of $K_{1,s+1}$, it becomes inadequate when we iteratively apply this bound later in \cref{sec:tree}. To decouple $C_1$ from $\eps$ in \cref{lem:ample-star}, we need the following classical result in degenerate extremal hypergraph theory.

\begin{theorem}[Erd\H{o}s~\cite{E64}] \label{lem:r-partite-turan}
  For every $r$-partite $r$-uniform hypergraph $H$ there exists $\eps > 0$ so that $\ex(n, H) = O(n^{r-\eps})$.%
  \footnote{Given an $r$-uniform hypergraph $H$, the Tur\'an number $\ex(n, H)$ is the maximum number of hyperedges in an $r$-uniform hypergraph on $n$ vertices that contains no $H$ as a subhypergraph.}\qed
\end{theorem}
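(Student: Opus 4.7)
The plan is to reduce the claim to the case of a complete $r$-partite $r$-uniform hypergraph. Any $r$-partite $r$-uniform hypergraph $H$ with parts $V_1, \ldots, V_r$ embeds into $K^{(r)}_{t,\ldots,t}$ with $t = \max_i \abs{V_i}$, so by monotonicity of the Turán function it suffices to prove $\ex(n, K^{(r)}_{t,\ldots,t}) = O(n^{r - \eps})$ for some $\eps = \eps(r,t) > 0$.

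I would prove this by induction on $r$. The base case $r = 2$ is the Kővári--Sós--Turán theorem, $\ex(n, K_{t,t}) = O(n^{2 - 1/t})$. For the inductive step, suppose $G$ is an $r$-uniform hypergraph on $n$ vertices with $m$ hyperedges and no copy of $K^{(r)}_{t,\ldots,t}$. For each $(r-1)$-set $T \subseteq V(G)$, let $d(T) = \abs{\set{v : T \cup \set{v} \in E(G)}}$, so that $\sum_T d(T) = r m$. For a $t$-subset $S$ of $V(G)$, define the $(r-1)$-uniform hypergraph $G^S$ on $V(G) \setminus S$ by declaring $T$ a hyperedge iff $T \cup \set{v} \in E(G)$ for every $v \in S$. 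The key observation is that if $G^S$ contains $K^{(r-1)}_{t,\ldots,t}$ for some $S$, then $G$ contains $K^{(r)}_{t,\ldots,t}$, with $S$ playing the role of an additional part.

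Double counting yields $\sum_S e(G^S) = \sum_T \binom{d(T)}{t}$, and Jensen's inequality applied to the convex function $\binom{x}{t}$ gives a lower bound of order $m^t / n^{(r-1)(t-1)}$ whenever $m \gg n^{r-1}$. Averaging over the $\binom{n}{t}$ choices of $S$, some $S$ satisfies $e(G^S) = \Omega(m^t / n^{rt - (r-1)})$. By the inductive hypothesis, this forces $G^S$ to contain $K^{(r-1)}_{t,\ldots,t}$ whenever the lower bound exceeds $C' n^{(r-1) - \eps'}$ with $\eps' = \eps(r-1,t) > 0$. Solving, this happens as soon as $m \geq C n^{r - \eps'/t}$, so we recover the recursion $\eps(r,t) = \eps(r-1,t)/t$; unwinding from the base value $\eps(2,t) = 1/t$ yields the explicit constant $\eps(r,t) = 1/t^{r-1}$.

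The argument is standard and the only point requiring care is the exponent bookkeeping in the Jensen-plus-induction step. Since positivity of $\eps(r-1,t)$ immediately gives positivity of $\eps(r,t)$, the conclusion $\eps > 0$ propagates trivially along the induction, completing the proof.
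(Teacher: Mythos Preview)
Your argument is correct and is essentially the standard proof of Erd\H{o}s's theorem: reduce to the complete $r$-partite hypergraph $K^{(r)}_{t,\dots,t}$, then induct on $r$ with K\H{o}v\'ari--S\'os--Tur\'an as the base case, using the link construction $G^S$ together with Jensen on the codegrees $\binom{d(T)}{t}$. The exponent bookkeeping checks out, giving $\eps(r,t)=1/t^{r-1}$.

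As for comparison with the paper: there is nothing to compare. The paper does not prove this statement at all---note the \verb|\qed| immediately after the theorem body. It is quoted as a classical result of Erd\H{o}s and used as a black box in the proof of \cref{claim:no-hstp}. So you have supplied a valid proof of a result the paper simply cites.
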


\begin{proof}[Proof of \cref{lem:ample-star}]
  Suppose that $G$ is an $n$-vertex graph such that $\amp_p(T, G) = 0$ and $\inj(F_0, G) \ge c_0n^{s_0}$, where $c_0$ is to be chosen. As we only deal with embeddings to $G$ in the following proof, we omit $G$ in $\Inj(\cdot, G),\Amp_\cdot(\cdot, G)$ and their relativized versions.
  
  Recall $s_0 = \max(s',1)$. Clearly $G$ contains no $F^p$ as a subgraph, where $F = T_{s,t,s_0}$.
  Let $U_0$ denote an arbitrary vertex subset of size $s_0$ in $G$, and denote $N_G(U_0)$ the common neighborhood of $U_0$ in $G$. Let $H$ be the $(s+1)$-uniform hypergraph on $V(G)$ given by $$H = \dset{\eta(R(F_1))}{\eta \in \Amp_{C_1}(F_1)},$$
  where $C_1 = v(T_{s,t,s_0}^p)$, and denote $H[N_G(U_0)]$ the subhypergraph of $H$ induced on $N_G(U_0)$.

  The strategy is to use $\sum_{U_0}e(H[N_G(U_0)])$ and $\sum_{U_0} \babsng$ as intermediaries to connect $\amp_{C_1}(F_1)$ and $\inj(F_1)$.

  \setcounter{claim}{0}

  \begin{claim} \label{claim:no-hstp}
    There exists $n_0 = n_0(s,t,p,C_1) \in \N$ such that for every $U_0$ with $\absng \ge n_0$,
    $$
      e(H[N_G(U_0)]) \le
      \frac{\eps}{4s_0^{s_0}}\babsng.
    $$
  \end{claim}

  \begin{claimproof}[Proof of \cref{claim:no-hstp}]
    Recall the vertex partition $V(F) = P \cup Q \cup S \cup S'$ from \cref{fig:vertex-subsets}. This partition induces the vertex partition $V(F^p) = \widetilde{P} \cup \widetilde{Q} \cup S \cup S'$, where $\widetilde{P}$ denotes the union of the $p$ disjoint copies of $P$ in $F^p$, and $\widetilde{Q}$ is defined similarly.
    Let $H_0$ be the $(s+1)$-uniform hypergraph on $\widetilde{P} \cup S$ with each hyperedge given by the $s+1$ neighbors of a vertex of $\widetilde{Q}$ in $F^p$.

    Observe that $H[N_G(U_0)]$ never contains $H_0$ as a subhypergraph. Suppose on the contrary that there exists an embedding $\eta$ from $H_0$ to $H[N_G(U_0)]$,%
    \footnote{Given two hypergraphs $H_1$ and $H_2$ of the same uniformity, an embedding from $H_1$ to $H_2$ is just an injection $\eta\colon V(H_1) \to V(H_2)$ such that $\eta(e) \in H_2$ for every $e \in H_1$.}
    then we can embed $F^p$ in $G$ by mapping $S'(F)$ to $U_0$, mapping $P(F^p) \cup S(F)$ according to $\eta$, and embedding the vertices in $Q(F^p)$ greedily. The last step of the embedding is possible because for every hyperedge $e \in H_0$, $\eta(e) = \eta'(R(F_1))$ for some $\eta'\in \Amp_{C_1}(F_1)$, and more importantly $C_1 \ge v(F^p)$.

    Since $H_0$ is an $(s+1)$-partite hypergraph, the claim follows from \cref{lem:r-partite-turan} immediately.
  \end{claimproof}

  We choose such $n_0 \in \N$ in \cref{claim:no-hstp} and require in addition that $n_0 \ge s+1$. For convenience, set
  $$\U = \dset{U_0 \subseteq V(G)}{\abs{U_0} = s_0, \absng \ge n_0}.$$

  \begin{claim} \label{claim:ample-star}
    The number of $C_1$-ample embeddings from $F_1$ to $G$ satisfies
    $$
      \amp_{C_1}(F_1) \le \frac{s_0^{s_0}(s+1)!}{C_1^{s_0-1}}\sum_{U_0} e(H[N_G(U_0)]).
    $$
  \end{claim}

  \begin{claimproof}[Proof of \cref{claim:ample-star}]
    Let $\sigma$ denote an arbitrary injection from $R(F_1)$ to $V(G)$, and denote for short $a(\sigma) = \amp_{C_1}(F_1; \sigma)$. Note that $a(\sigma)$ has the dichotomy that either $a(\sigma) = 0$ or $a(\sigma) \ge C_1 \ge s_0$, which implies that $\binom{a(\sigma)}{s_0} \ge (a(\sigma)/s_0)^{s_0} \ge C_1^{s_0-1}a(\sigma)/s_0^{s_0}$ in either case. Through counting in two ways the disjoint union of the edge sets of $H[N_G(U_0)]$ for all vertex subsets $U_0$ of size $s_0$ in $G$, one can show that
    $$
      (s+1)!\sum_{U_0} e(H[N_G(U_0)]) = \sum_{\sigma} \binom{a(\sigma)}{s_0} \ge \frac{C_1^{s_0-1}}{s_0^{s_0}}\sum_{\sigma} a(\sigma) = \frac{C_1^{s_0-1}}{s_0^{s_0}}\amp_{C_1}(F_1),
    $$
    which implies the desired inequality in the claim.
  \end{claimproof}

  \begin{claim} \label{claim:ample-star-2}
    The number of embeddings from $F_1$ to $G$ satisfies
    $$
      \inj(F_1) \ge \frac{(s+1)!}{2C_1^{s_0}}\sum_{U_0\in \U}\babsng.
    $$
  \end{claim}
  \begin{claimproof}[Proof of \cref{claim:ample-star-2}]
    We count in two ways the disjoint union $\bigsqcup_{U_0 \in \U}I(U_0)$, where
    $$
      I(U_0) := \dset{\eta \in \Inj(F_1) \setminus \Amp_{C_1}(F_1)}{\eta(R(F_1)) \subseteq N_G(U_0)}.
    $$
    On the one hand, for a fixed $U_0$ with $\absng \ge n_0$, every subset of $N_G(U_0)$ of size $s+1$ that is not a hyperedge of $H[N_G(U_0)]$ gives rise to at least $s_0(s+1)!$ many $\eta\in I(U_0)$, and it follows from \cref{claim:no-hstp} that $e(H[N_G(U_0)]) \le \frac12\babsng$. Thus we get
    $$
      \abs{I(U_0)} \ge \frac{s_0(s+1)!}{2} \babsng, \quad\text{for every }U_0 \in \U.
    $$
    On the other hand, for every $\eta\in \Inj(F_1) \setminus \Amp_{C_1}(F_1)$, there are at most $\binom{C_1}{s_0}$ many $U_0$ such that $\eta(R(F_1)) \subseteq N_G(U_0)$. Hence
    $$
      \inj(F_1) \ge \inj(F_1) - \amp_{C_1}(F_1) \ge \frac{1}{\binom{C_1}{s_0}}\sum_{U_0}\abs{I(U_0)} \ge \frac{s_0!}{C_1^{s_0}}\sum_{U_0}\abs{I(U_0)},
    $$
    which implies the desired inequality in the claim.
  \end{claimproof}

  A simple double counting argument shows that 
  $$
    \inj(F_0) = s_0!\sum_{U_0}\absng.
  $$
  Recall the assumption that $\inj(F_0) \ge c_0n^{s_0}$. Thus the average $\bar{N}$ of $\absng$ satisfies $$
    \bar{N} = \frac{\inj(F_0)}{s_0!\binom{n}{s_0}} \ge c_0.
  $$
  We can choose $c_0 > 0$ large enough so that $\binom{\bar{N}}{s+1} \ge \left(1+4s_0^{s_0}C_1/\eps\right)\binom{n_0}{s+1}$.
  By Jensen's inequality, we have
  $$
    \sum_{U_0}\babsng \ge \binom{n}{s_0}\binom{\bar{N}}{s+1} \ge \left(1+4s_0^{s_0}C_1/\eps\right)\sum_{U_0 \not\in \U}\babsng,
  $$
  which implies that
  $$
    \sum_{U_0 \not\in \U} \babsng \le \frac{\eps}{4s_0^{s_0}C_1} \sum_{U_0 \in \U}\babsng.
  $$
  
  Applying \cref{claim:ample-star} and then \cref{claim:no-hstp}, we get
  \begin{multline*}
    \frac{C_1^{s_0-1}}{s_0^{s_0}(s+1)!}\amp_{C_1}(F_1) \le 
    \sum_{U_0} e(H[N_G(U_0)]) \\
    \le \sum_{U_0 \not\in \U}\babsng + \frac{\eps}{4s_0^{s_0}C_1}\sum_{U_0 \in \U}\babsng \le \frac{\eps}{2s_0^{s_0}C_1}\sum_{U_0 \in \U}\babsng,
  \end{multline*}
  which implies
  $$
    \amp_{C_1}(F_1) \le \frac{(s+1)!\eps}{2C_1^{s_0}}\sum_{U_0\in\U}\babsng.
  $$
  Comparing it with \cref{claim:ample-star-2}, we get the desired inequality in \cref{lem:ample-star}.
\end{proof}

\begin{proof}[Proof of \cref{thm:neg}\ref{item:neg-1}]
  For $s,t \in \N^+$ and $s' \in \N$, set $s_0 = \max(s',1)$, and $T = T_{s,t,s'}$. Since $T$ is balanced, by \cref{lem:balance-condition}, $s_0 \le s+1$ and $\rho_T \ge s_0$, the latter of which implies that $1+s_0\alpha \ge s_0$, where $\alpha = 1 - 1/\rho_T$.

  Let $p \in \N^+, C_* = v(T_{s,t,s+1}^p) \ge v(T_{s,t,s_0}^p)$ and $\eps > 0$, and let $c_0 > 0$ be a constant to be determined later. Suppose that $c > c_0$ and $G$ is an $n$-vertex graph with $n \ge n_0(c)$ such that every vertex in $G$ has degree between $d$ and $Kd$, where $d = cn^\alpha$ and $K = 5^{4/\alpha}$, and moreover $\amp_p(T, G) = 0$. Clearly, $\inj(K_{1,s+1}, G) \le n(Kd)^{s}$.
  We apply \cref{lem:ample-star} and obtain $c_1 > 0$ so that if $\inj(K_{1,s_0}, G) \ge c_1n^{s_0}$ then
  $$
    \amp_{C_*}(K_{1,s+1}, G) \le \eps \inj(K_{1,s+1},G) \le \eps n(Kd)^{s+1} = \eps K^{s+1} nd^{e(K_{1,s+1})}.
  $$
  Since $1+s_0\alpha \ge s_0$, we have
  $$
    \inj(K_{1,s_0}, G) \ge (1-o(1))nd^{s_0} = (1-o(1))c^{s_0}n^{1+s_0\alpha} \ge (1-o(1))c^{s_0}n^{s_0}.
  $$
  Thus the condition $\inj(K_{1,s_0}, G) \ge c_1n^{s_0}$ can be met by choosing $c_0 = c_1^{1/s_0}$ and $n_0(c)$ sufficiently large.
\end{proof}

\section{Ample embeddings of subtrees} \label{sec:tree}

\subsection{Preliminary propositions}

For the proof of \cref{thm:neg}\ref{item:neg-2}, we need the following variation of the classical sunflower lemma for sequences (see \cite{ALWZ20} for the recent breakthrough on the sunflower conjecture of Erd\H{o}s and Rado~\cite{ER60} and related background).

\begin{definition}[Sequential sunflower]
  Suppose that $W \subseteq V^k$ is a system of sequences. A subset $S$ of $W$ is a \emph{sequential sunflower} with \emph{kernel} $I \subsetneq [k]$ if for every pair of distinct sequences $(s_1, \dots, s_k), (s_1', \dots, s_k') \in S$, the subsequences $(s_i)_{i \in I}$ and $(s_i')_{i \in I}$ are equal, but the sets $\dset{s_i}{i \not\in I}$ and $\dset{s_i'}{i \not\in I}$ are disjoint.
\end{definition}

\begin{proposition} \label{lem:sunflower}
  Fix $k, C \in \N^+$. Suppose that $W \subseteq V^k$ is a system of sequences such that each sequence in $W$ consists of $k$ distinct elements. If $W$ contains no sequential sunflower of size $C$, then $\abs{W} < (k!)^2(k!C-1)^k$.
\end{proposition}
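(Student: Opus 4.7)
The plan is to reduce to the classical Erd\H{o}s--Rado sunflower lemma for $k$-sets, which states that any family of $k$-sets of size more than $k!(C'-1)^k$ contains a sunflower with $C'$ petals. Pass from sequences to underlying sets: for each $w = (w_1, \ldots, w_k) \in W$, let $\hat w = \{w_1, \ldots, w_k\}$, which is a $k$-element set by the distinctness assumption, and set $\hat W = \{\hat w : w \in W\}$. Since each $k$-set is the underlying set of at most $k!$ sequences in $W$, one has $|W| \le k! \cdot |\hat W|$. I will prove the contrapositive: assuming $|W| \ge (k!)^2(k!C - 1)^k + 1$, I will exhibit a sequential sunflower of size $C$ in $W$. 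Under this assumption $|\hat W| > k!(k!C-1)^k$, so Erd\H{o}s--Rado provides distinct sets $S_1, \ldots, S_{k!C} \in \hat W$ with common intersection $Y$ and pairwise disjoint petals $S_i \setminus Y$. Since the $S_i$ are distinct of size $k$, necessarily $|Y| \le k - 1$.

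For each $i$, choose a representative $w^{(i)} \in W$ with $\hat w^{(i)} = S_i$, and record its \emph{position pattern} $I_i = \{j \in [k] : w^{(i)}_j \in Y\}$ together with the bijection $\phi_i : I_i \to Y$, $j \mapsto w^{(i)}_j$; note $|I_i| = |Y|$. The number of possible pairs $(I_i, \phi_i)$ is $\binom{k}{|Y|}|Y|! = k!/(k-|Y|)!$, so by pigeonhole at least $k!C \big/ \bigl(k!/(k-|Y|)!\bigr) = C(k-|Y|)! \ge C$ of the indices $i$ share a common pair $(I,\phi)$. Restricting to $C$ such indices, the corresponding sequences have identical subsequences on $I$ (both in position and value, namely $\phi$), while their remaining entries lie inside the pairwise disjoint petals $S_i \setminus Y$ and therefore form pairwise disjoint sets. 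This yields the desired sequential sunflower with kernel $I$, and $|I| = |Y| < k$ ensures $I \subsetneq [k]$ as required.

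The argument is essentially a two-stage pigeonhole: one application of Erd\H{o}s--Rado at the set level and one ordinary pigeonhole at the ordering level. The only subtlety, which also explains the $(k!)^2$ factor in the claimed bound, is calibration: one oversizes the target set-sunflower by a factor of $k!$ in order to absorb the $\binom{k}{|Y|}|Y|! \le k!$ possible kernel position patterns when lifting from sets back to sequences. I expect no essential difficulty beyond identifying the right auxiliary family $\hat W$ and invoking the classical sunflower lemma on it.
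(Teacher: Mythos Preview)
Your proof is correct and follows essentially the same approach as the paper: pass to the family of underlying $k$-sets, apply the Erd\H{o}s--Rado sunflower lemma to obtain a set-sunflower of size $k!C$, then pigeonhole over the at most $k!$ possible ``position patterns'' of the kernel to lift back to a sequential sunflower of size $C$. Your write-up is in fact slightly cleaner than the paper's in one respect: by explicitly choosing a single representative sequence for each set in the sunflower, you guarantee that the $C$ sequences obtained after pigeonhole have distinct underlying sets, so their petals are automatically disjoint.
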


\begin{proof}
  Consider the system $F$ of subsets of $V$ defined by
  $$
    F = \dset{\set{s_1, \dots, s_k}}{(s_1, \dots, s_k) \in W}.
  $$
  Clearly $\abs{W} \le k!\abs{F}$. We claim that $F$ contains no sunflower of size $k!C$. Recall that a sunflower is a collection of sets whose pairwise intersection is constant. Assuming the claim, the classical sunflower lemma precisely states that $\abs{F} < k!(k!C-1)^k$, which implies the desired inequality. Suppose on the contrary that $E \subseteq F$ is a sunflower of size $k!C$ with kernel $K$. Consider the subsystem of sequences $W_0 = \dset{(s_1, \dots, s_k) \in W}{\set{s_1, \dots, s_k} \in E}$. Clearly $\abs{W_0} \ge k!C$. By the pigeonhole principle, there exist a set $W_1 \subseteq W_0$ of size $C$ and $I \subsetneq [k]$ such that for every $s \in W_1$, $\dset{s_i}{i \in I} = K$ and $(s_i)_{i\in I}$ is a constant subsequence. As $E$ is a sunflower, one can check that $W_1$ is a sequential sunflower of size $C$, which is a contradiction.
\end{proof}

We also need the following classical theorem due to K\H{o}v\'ari, S\'os and Tur\'an~\cite{KST54} on the Zarankiewicz problem.

\begin{proposition} \label{lem:kst}
  Fix $s, t \in \N^+$. Suppose that $H$ is a bipartite graph with two parts $U$ and $W$ such that every vertex in $W$ has degree at least $s$. If $H$ contains no complete bipartite subgraph with $s$ vertices in $U$ and $t$ vertices in $W$, then $e(H) \le K\abs{U}\abs{W}^{1-1/s}$, where $K = s\sqrt[s]{(t-1)/s!}$. \qed
\end{proposition}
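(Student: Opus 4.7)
The plan is to run the standard double counting argument of Kővári, Sós, and Turán on $s$-stars with center in $W$. Specifically, I would count pairs $(w, A)$ where $w \in W$, $A \subseteq U$ with $\abs{A} = s$, and $w$ is adjacent to every vertex of $A$.

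Counting these pairs by first choosing $w$ gives $\sum_{w \in W} \binom{d(w)}{s}$, where $d(w) \ge s$ by hypothesis. Counting them by first choosing $A$ gives at most $(t-1)\binom{\abs{U}}{s}$, since for any $s$-subset $A \subseteq U$ with at least $t$ common neighbors in $W$ the graph would contain the forbidden $K_{s,t}$. Hence
$$
\sum_{w \in W} \binom{d(w)}{s} \le (t-1)\binom{\abs{U}}{s} \le \frac{t-1}{s!}\abs{U}^s.
$$
Since $\binom{x}{s}$ is convex on $[s, \infty)$ and $d(w) \ge s$ for every $w \in W$, Jensen's inequality applied to the left-hand side yields $\abs{W}\binom{\bar d}{s}$, where $\bar d := e(H)/\abs{W} \ge s$ is the average degree.

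Using the elementary bound $\binom{x}{s} \ge (x/s)^s$ valid for $x \ge s$, one gets $\abs{W}(\bar d/s)^s \le (t-1)\abs{U}^s/s!$, which upon rearrangement becomes
$$
e(H) = \abs{W}\bar d \le s\bigl((t-1)/s!\bigr)^{1/s}\abs{U}\abs{W}^{1-1/s} = K\abs{U}\abs{W}^{1-1/s},
$$
matching the stated bound. There is no real obstacle here; the only things to verify are the convexity/Jensen step (which needs $\bar d \ge s$, guaranteed by the minimum degree hypothesis on $W$) and the inequality $\binom{x}{s} \ge (x/s)^s$ for $x \ge s$, which follows term-by-term from $x-i \ge (s-i)x/s$ for $0 \le i < s$.
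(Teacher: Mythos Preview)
Your argument is the standard K\H{o}v\'ari--S\'os--Tur\'an double count, and every step checks out: the forbidden-$K_{s,t}$ hypothesis bounds the count from the $U$-side by $(t-1)\binom{\abs{U}}{s}$, the minimum-degree hypothesis on $W$ guarantees $\bar d \ge s$ so that both the Jensen step and the inequality $\binom{x}{s}\ge(x/s)^s$ apply, and the rearrangement to $e(H)\le K\abs{U}\abs{W}^{1-1/s}$ is routine.

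There is nothing to compare against: the paper states this proposition with a terminal \qed\ and a citation to K\H{o}v\'ari, S\'os and Tur\'an~\cite{KST54}, supplying no proof of its own.
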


The following result is a generalization of a result due to F\"{u}redi~\cite{F91}. Our proof of the generalization follows the proof of F\"{u}redi's result by Alon, Krivelevich, and Sudakov~\cite{AKS03} using dependent random choice (see \cite{FS11} for a survey on dependent random choice). We denote $d_G(v)$ the degree of a vertex $v$ in $G$.

\begin{proposition} \label{lem:drc}
  Fix $k, r\in \N^+$ such that $k < r$. Suppose that $F$ is a bipartite graph with two parts $U_0$ and $W_0$ such that every vertex in $W_0$ has degree at most $r$. For every bipartite graph $G$ with two parts $U$ and $W$, if there is no embedding $\eta$ from $F$ to $G$ such that $\eta(U_0) \subseteq U$ and $\eta(W_0) \subseteq W$, then
  $$
    \sum_{u \in U}d_G(u)^k \le \left(K_1\abs{U}^{k} + K_2\abs{W}^k\right)\abs{U}^{1-k/r},
  $$
  where $K_1 = \abs{W_0}^k/(r!)^{k/r}$ and $K_2 = (\abs{U_0}-1)^{k/r}$.
\end{proposition}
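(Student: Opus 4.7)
The plan is to prove the proposition in two stages: first establish via dependent random choice the $r$-th moment bound
\[
  \sum_{u \in U} d_G(u)^r \le (\abs{U_0}-1)\abs{W}^r + \abs{W_0}^r\abs{U}^r/r!,
\]
and then apply H\"older's inequality to pass from the $r$-th moment to the $k$-th moment.

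For the first stage, sample an ordered $r$-tuple $P = (w_1, \dots, w_r) \in W^r$ uniformly with replacement and set $S := \bigcap_{i=1}^{r} N_G(w_i) \cap U$, so that $\mathbb{E}\abs{S} = \sum_u (d_G(u)/\abs{W})^r$. Call an $r$-subset $T \subseteq U$ \emph{bad} if $\abs{N_G(T) \cap W} < \abs{W_0}$, and let $Y$ count the bad $r$-subsets contained in $S$. Since each bad $T$ satisfies $\Pr[T \subseteq S] = (\abs{N_G(T) \cap W}/\abs{W})^r \le ((\abs{W_0}-1)/\abs{W})^r$, and there are at most $\binom{\abs{U}}{r} \le \abs{U}^r/r!$ such $r$-subsets, we obtain $\mathbb{E}[Y] \le \abs{W_0}^r\abs{U}^r/(r!\abs{W}^r)$. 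The no-$F$-embedding hypothesis then forces $\mathbb{E}\abs{S} - \mathbb{E}[Y] \le \abs{U_0} - 1$: if instead some realization of $P$ satisfies $\abs{S} - Y \ge \abs{U_0}$, greedy deletion of one vertex per bad $r$-subset of $S$ produces $S' \subseteq S$ with $\abs{S'} \ge \abs{U_0}$ in which every $r$-subset is good, after which injecting $\pi \colon U_0 \to S'$ arbitrarily and greedily placing each $w_0 \in W_0$ in an unused common neighbor (in $W$) of any $r$-superset of $\pi(N_F(w_0))$ inside $S'$ yields an embedding of $F$ into $G$---the relevant common neighborhood has size at least $\abs{W_0}$ by goodness, exceeding the at most $\abs{W_0}-1$ images already in place.

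For the second stage, apply H\"older with conjugate exponents $r/k$ and $r/(r-k)$ (valid since $k < r$):
\[
  \sum_{u \in U} d_G(u)^k \le \Bigl(\sum_{u \in U} d_G(u)^r\Bigr)^{k/r} \abs{U}^{(r-k)/r}.
\]
The subadditivity $(a+b)^{k/r} \le a^{k/r} + b^{k/r}$ for $a, b \ge 0$ and $k/r \in (0,1]$ splits the first factor into $(\abs{U_0}-1)^{k/r}\abs{W}^k + \abs{W_0}^k\abs{U}^k/(r!)^{k/r}$, and multiplying by $\abs{U}^{1-k/r}$ recovers exactly $K_1\abs{U}^{k+1-k/r} + K_2\abs{W}^k\abs{U}^{1-k/r}$, as required. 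The principal delicacy is the correctness of the greedy embedding in the first stage---one must verify that the deletion removes at most $Y$ vertices (so $\abs{S'} \ge \abs{S}-Y$) and that each $r$-superset extension of $\pi(N_F(w_0))$ inside $S'$ remains good; in the boundary case $\abs{U_0} < r$ (where $\abs{S'}$ may fall short of $r$), one reduces $r$ to $\min(r, \abs{U_0})$ in the hypothesis without loss of generality, which only tightens the stated bound.
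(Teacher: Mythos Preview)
Your proof is correct and essentially identical to the paper's: both sample an $r$-tuple from $W$ uniformly with repetition, compare $\mathbb{E}[X]$ with the expected number $\mathbb{E}[Y]$ of bad $r$-subsets, delete one vertex per bad subset, and embed $F$ greedily; the only cosmetic difference is that you isolate a clean $r$-th moment inequality first and then apply H\"older, whereas the paper folds H\"older into a single proof by contradiction.

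One small caveat on the closing remark: replacing $r$ by $r' = \min(r,\abs{U_0})$ does not obviously ``only tighten the stated bound''---the exponent $1-k/r'$ shrinks, but $K_1$ and $K_2$ can grow (since $(n!)^{1/n}$ is increasing and $(\abs{U_0}-1)^{k/r'} \ge (\abs{U_0}-1)^{k/r}$ when $\abs{U_0}\ge 2$), and if $\abs{U_0} \le k$ the hypothesis $k<r'$ fails outright. The paper's proof simply does not address this edge case; a cleaner patch is to pad $U_0$ with $r-\abs{U_0}$ isolated vertices, which preserves the no-embedding hypothesis whenever $\abs{U}\ge r$ and leaves the greedy step valid since $\abs{S'}\ge r$.
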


\begin{proof}
  Assume for the sake of contradiction that
  $$
    \sum_{u \in U}d(u)^k > (r!)^{-k/r}\abs{W_0}^k\abs{U}^{k+1-k/r} + (\abs{U_0}-1)^{k/r}\abs{U}^{1-k/r}\abs{W}^k.
  $$
  Pick a subset $W_1 \subseteq W$ of size $r$ uniformly at random with replacement. Set $U(W_1) \subseteq U$ to be the common neighborhood of $W_1$ in $G$, and let $X$ denote the cardinality of $U(W_1)$. By linearity of expectation and H\"older's inequality,
  \begin{multline*}
    \EE[X] = \sum_{u \in U}\left(\frac{d(u)}{\abs{W}}\right)^r \ge \frac{\left(\sum_{u \in U}d(u)^k\right)^{r/k}}{\abs{U}^{r/k-1}\abs{W}^r} \\
    > \frac{(r!)^{-1}\abs{W_0}^r\abs{U}^{r+r/k-1}+(\abs{U_0}-1)\abs{U}^{r/k-1}\abs{W}^r}{\abs{U}^{r/k-1}\abs{W}^r} \ge \frac{\abs{U}^r}{r!}\left(\frac{\abs{W_0}}{\abs{W}}\right)^r + \abs{U_0} - 1.
  \end{multline*}
  Let $Y$ denote the random variable counting the number of subsets $S \subseteq U(W_1)$ of size $r$ with fewer than $\abs{W_0}$ common neighbors in $G$. For a given such $S$, the probability that it is a subset of $U(W_1)$ is less than $(\abs{W_0}/\abs{W})^r$. Since there are at most $\binom{\abs{U}}{r}$ subsets $S$ of size $r$, it follows that
  $$
    \EE[Y] < \binom{\abs{U}}{r}\left(\frac{\abs{W_0}}{\abs{W}}\right)^r \le \frac{\abs{U}^r}{r!}\left(\frac{\abs{W_0}}{\abs{W}}\right)^r.
  $$
  By linearity of expectation,
  $$
    \EE[X - Y] > \frac{\abs{U}^r}{r!}\left(\frac{\abs{W_0}}{\abs{W}}\right)^r + \abs{U_0} - 1 - \frac{\abs{U}^r}{r!}\left(\frac{\abs{W_0}}{\abs{W}}\right)^r = \abs{U_0} - 1.
  $$
  Hence there exists a choice of $W_1$ for which $X - Y \ge \abs{U_0}$. Delete one vertex from each subset $S$ of $U(W_1)$ of size $r$ with fewer than $m$ common neighbors. We let $U'$ be the remaining subset of $U(W_1)$. The set $U' \subseteq U$ has at least $\abs{U_0}$ vertices, and every subset of $U'$ of size $r$ has at least $\abs{W_0}$ common neighbors. One can then greedily find an embedding $\eta$ from $F$ to $G$ such that $\eta(U_0) \subseteq U'$ and $\eta(W_0) \subseteq W$.
\end{proof}

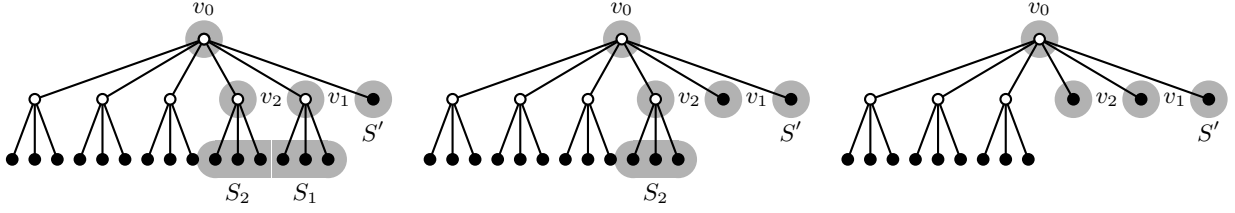
\begin{figure}
  \centering
  \begin{tikzpicture}[thick, scale=0.5, baseline=(v.base)]
    \fill[gray!60] (0,1.2) circle (0.5);
    \node at (0,2) {\footnotesize $v_0$};

    \coordinate (v) at (0,0);
    \fill[gray!60] (.9,-.4) circle (0.5);
    \fill[gray!60] (2.7,-.4) circle (0.5);
    \fill[gray!60] (4.5,-.4) circle (0.5);
    \fill[gray!60] (.3,-2) circle (0.5);
    \fill[gray!60] (3.3,-2) circle (0.5);
    \fill[gray!60] (.3,-2.5)--(1.78,-2.5)--(1.78,-1.5)--(.3,-1.5)--cycle;
    \fill[gray!60] (3.3,-2.5)--(1.82,-2.5)--(1.82,-1.5)--(3.3,-1.5)--cycle;
    \node at (.9,-2.9) {\footnotesize $S_2$};
    \node at (2.7,-2.9) {\footnotesize $S_1$};
    \node at (4.5,-1.25) {\footnotesize $S'$};

    \draw (-4.5,-.4) -- (-5.1,-2) node[root]{};
    \draw (-4.5,-.4) -- (-4.5,-2) node[root]{};
    \draw (-4.5,-.4) -- (-3.9,-2) node[root]{};
    \draw (0,1.2) -- (-4.5,-.4) node[vertex]{};

    \draw (-2.7,-.4) -- (-3.3,-2) node[root]{};
    \draw (-2.7,-.4) -- (-2.7,-2) node[root]{};
    \draw (-2.7,-.4) -- (-2.1,-2) node[root]{};
    \draw (0,1.2) -- (-2.7,-.4) node[vertex]{};

    \draw (-.9,-.4) -- (-1.5,-2) node[root]{};
    \draw (-.9,-.4) -- (-.9,-2) node[root]{};
    \draw (-.9,-.4) -- (-.3,-2) node[root]{};
    \draw (0,1.2) -- (-.9,-.4) node[vertex]{};

    \node at (1.8,-.4) {\footnotesize $v_2$};
    \node at (3.6,-.4) {\footnotesize $v_1$};
    
    \draw (.9,-.4) -- (.3,-2) node[root]{};
    \draw (.9,-.4) -- (.9,-2) node[root]{};
    \draw (.9,-.4) -- (1.5,-2) node[root]{};
    \draw (0,1.2) -- (.9,-.4) node[vertex]{};

    \draw (2.7,-.4) -- (2.1,-2) node[root]{};
    \draw (2.7,-.4) -- (2.7,-2) node[root]{};
    \draw (2.7,-.4) -- (3.3,-2) node[root]{};
    \draw (0,1.2) node[vertex]{} -- (2.7,-.4) node[vertex]{};

    \draw (0,1.2) node[vertex]{} -- (4.5,-.4) node[root]{};
  \end{tikzpicture}\quad%
  \begin{tikzpicture}[thick, scale=0.5, baseline=(v.base)]
    \coordinate (v) at (0,0);

    \fill[gray!60] (0,1.2) circle (0.5);
    \fill[gray!60] (.9,-.4) circle (0.5);
    \fill[gray!60] (2.7,-.4) circle (0.5);
    \fill[gray!60] (.3,-2) circle (0.5);
    \fill[gray!60] (1.5,-2) circle (0.5);
    \fill[gray!60] (4.5,-.4) circle (0.5);

    \fill[gray!60] (.3,-2.5)--(1.5,-2.5)--(1.5,-1.5)--(.3,-1.5)--cycle;
    \node at (.9,-2.9) {\footnotesize $S_2$};
    \node at (4.5,-1.25) {\footnotesize $S'$};

    \draw (-4.5,-.4) -- (-5.1,-2) node[root]{};
    \draw (-4.5,-.4) -- (-4.5,-2) node[root]{};
    \draw (-4.5,-.4) -- (-3.9,-2) node[root]{};
    \draw (0,1.2) -- (-4.5,-.4) node[vertex]{};

    \draw (-2.7,-.4) -- (-3.3,-2) node[root]{};
    \draw (-2.7,-.4) -- (-2.7,-2) node[root]{};
    \draw (-2.7,-.4) -- (-2.1,-2) node[root]{};
    \draw (0,1.2) -- (-2.7,-.4) node[vertex]{};

    \draw (-.9,-.4) -- (-1.5,-2) node[root]{};
    \draw (-.9,-.4) -- (-.9,-2) node[root]{};
    \draw (-.9,-.4) -- (-.3,-2) node[root]{};
    \draw (0,1.2) -- (-.9,-.4) node[vertex]{};

    \draw (.9,-.4) -- (.3,-2) node[root]{};
    \draw (.9,-.4) -- (.9,-2) node[root]{};
    \draw (.9,-.4) -- (1.5,-2) node[root]{};
    \draw (0,1.2) -- (.9,-.4) node[vertex]{};

    \node at (1.8,-.4) {\footnotesize $v_2$};
    \node at (3.6,-.4) {\footnotesize $v_1$};

    \node at (0,2) {\footnotesize $v_0$};
    \draw (0,1.2) -- (4.5,-.4) node[root]{};
    \draw (0,1.2) node[vertex]{} -- (2.7,-.4) node[root]{};
  \end{tikzpicture}\quad%
  \begin{tikzpicture}[thick, scale=0.5, baseline=(v.base)]
    \coordinate (v) at (0,0);
    \fill[gray!60] (0,1.2) circle (0.5);
    \fill[gray!60] (.9,-.4) circle (0.5);
    \fill[gray!60] (2.7,-.4) circle (0.5);
    \fill[gray!60] (4.5,-.4) circle (0.5);

    \node at (4.5,-1.25) {\footnotesize $S'$};

    \node at (0,2) {\footnotesize $v_0$};
    \draw (0,1.2) -- (.9,-.4) node[root]{};

    \draw (-4.5,-.4) -- (-5.1,-2) node[root]{};
    \draw (-4.5,-.4) -- (-4.5,-2) node[root]{};
    \draw (-4.5,-.4) -- (-3.9,-2) node[root]{};
    \draw (0,1.2) -- (-4.5,-.4) node[vertex]{};

    \draw (-2.7,-.4) -- (-3.3,-2) node[root]{};
    \draw (-2.7,-.4) -- (-2.7,-2) node[root]{};
    \draw (-2.7,-.4) -- (-2.1,-2) node[root]{};
    \draw (0,1.2) -- (-2.7,-.4) node[vertex]{};

    \draw (-.9,-.4) -- (-1.5,-2) node[root]{};
    \draw (-.9,-.4) -- (-.9,-2) node[root]{};
    \draw (-.9,-.4) -- (-.3,-2) node[root]{};
    \draw (0,1.2) -- (-.9,-.4) node[vertex]{};

    \node at (1.8,-.4) {\footnotesize $v_2$};
    \node at (3.6,-.4) {\footnotesize $v_1$};

    \draw (0,1.2) -- (4.5,-.4) node[root]{};
    \draw (0,1.2) node[vertex]{} -- (2.7,-.4) node[root]{};
  \end{tikzpicture}\quad%
  \caption{$F_0$, $F_1$ and $F_2$.} \label{fig:3-trees}
\end{figure}

\subsection{Proof of \texorpdfstring{\cref{thm:neg}\ref{item:neg-2}}{Theorem 19(b)}}

We inductively deduce the negligibility of $F_k$ by that of $F_1, \dots, F_{k-1}$, where $F_k = T_{s,t-k,s'+k}$. In each inductive step, we also need to set aside the embeddings from $F_k$ to $G$ that extend the ample embeddings from $K_{1,s+1}$ to $G$ which were already dealt with in \cref{lem:ample-star}. Recall $\Ext_C(F_1, F_2, G)$ from \cref{def:extension}, and that $\ext_C(F_1, F_2, G)$ denotes its cardinality.

In the rest of the section, $s, t, p$ are fixed parameters and $n$ is a parameter that goes off to infinity. For two quantities $a, b$ with $b > 0$ that possibly depend on $n$, we write $a \lesssim b$ if there exist $C = C(s,t,p) > 0$ and $n_0 \in \N$ such that $a \le C b$ for all $n \ge n_0$.

\begin{lemma} \label{lem:ample-tree}
  Fix $s, t, p, k \in \N^+$ and $s' \in \N$ such that $s' < s$, $k \le s$ and $k < t$. Set $F_i = T_{s,t-i,s'+i}$ and $C_i = pv(F_0)^i$, for $0 \le i \le k$, and set $F_k^- = T_{s,t-k,s'}$, $\alpha = 1 - 1/\rho_{F_0}$, and $C_* = v(T_{s,t,s+1}^p)$. When $k = 1$, assume that $\alpha \ge 1 - 1/s$; and when $k \ge 2$, assume that
  \begin{equation} \label{eqn:k-cond}
    t \ge \left(1 - \tfrac{s'}{s+1}\right)k\left(k-\tfrac{1}{s}\right)(s + 2 - k) + \tfrac{1}{s}.
  \end{equation}
  For every $c > 1$ and $n$-vertex graph $G$, if every vertex in $G$ has degree between $d$ and $Kd$, where $d = cn^\alpha$ and $K = 5^{4/\alpha}$, and moreover $\amp_{C_0}(F_0, G) = 0$, then
  $$
    \amp_{C_k}(F_k, G) - \ext_{C_*}(K_{1,s+1},F_k,G)
    \lesssim \tfrac{1}{c}\inj(F_k^-, G)d^{k} + \tfrac{1}{c}nd^{e(F_k)} + \sum_{i=1}^{k-1}\amp_{C_i}(F_i, G)d^{s(i-k)}.
  $$
\end{lemma}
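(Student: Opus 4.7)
Plan. The strategy is a promotion--demotion scheme between $F_k$ and its ``inflations'' $F_i$ for $i < k$. Since $F_i = T_{s,t-i,s'+i}$ is obtained from $F_k = T_{s,t-k,s'+k}$ by converting $k-i$ direct root positions into middle vertices (each subtending $s$ fresh leaf roots), any $\eta \in A := \Amp_{C_k}(F_k, G) \setminus \Ext_{C_*}(K_{1,s+1}, F_k, G)$ can potentially be \emph{promoted} to an $F_i$ embedding $\eta'$ by designating $k-i$ of $\eta$'s direct root images as new middle vertices and choosing $s$ fresh neighbors of each in $G$ as new leaf images. Conversely, the inverse \emph{demotion} $\eta' \mapsto \eta$ is at most $\binom{t-i}{k-i}$-to-one when $\eta'$ is fixed (only the choice of which $k-i$ middles to drop matters).

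I will partition $A$ according to the smallest $i \in \{1, \dots, k-1\}$ for which $\eta$ admits at least $d^{s(k-i)}$ distinct promotions that are $C_i$-ample $F_i$ embeddings; denote this subset by $A_i$. The degenerate level $i = 0$ is ruled out by the hypothesis $\amp_{C_0}(F_0, G) = 0$, so $i \ge 1$ for every non-exceptional $\eta \in A$. A double count over promotion--demotion pairs then yields
\[
|A_i| \cdot d^{s(k-i)} \le \binom{t-i}{k-i}\, \amp_{C_i}(F_i, G),
\]
after applying the sunflower lemma (\cref{lem:sunflower}) to disjointify families of promotions sharing too many new leaves. Dividing through gives the summand $d^{s(i-k)} \amp_{C_i}(F_i, G)$ on the right-hand side, summed over $i \in \{1, \dots, k-1\}$.

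The remaining \emph{exceptional} $\eta \in A$---those with fewer than $d^{s(k-i)}$ ample promotions at every level $i$---must be absorbed into the two trivial terms. The key leverage is that $\eta \notin \Ext_{C_*}(K_{1,s+1}, F_k, G)$ forces every $(s+1)$-subset of $\eta$'s image arising as the neighborhood of a non-root of $F_k$ to have fewer than $C_*$ common neighbors in $G$, a Zarankiewicz-type sparseness condition. Combining this with \cref{lem:drc} (dependent random choice) and \cref{lem:kst} (K\H{o}v\'ari--S\'os--Tur\'an) bounds the exceptional count by $\tfrac{K_*}{c}\bigl(\inj(F_k^-, G) d^k + nd^{e(F_k)}\bigr)$: the first summand counts $\eta$ whose structure, stripped of its $k$ promotable direct roots, factors through an $F_k^-$ embedding, the lost factor of $d^k$ being absorbed into the regrowth; the second is the completely naive extremal upper bound. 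The main obstacle will be the quantitative matching between the $d^{s(k-i)}$ threshold and the Zarankiewicz exponent at each intermediate level: the inequality \cref{eqn:k-cond} on $t$ is precisely what is needed for \cref{lem:drc} to be applicable with the correct parameters for every $2 \le k \le s - s'$, ensuring the exceptional class does not inflate as we vary the promotion depth.
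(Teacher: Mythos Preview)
Your promotion--demotion double count for the classes $A_i$ is a reasonable heuristic, but the proposal has a genuine gap in the treatment of the exceptional embeddings, which is in fact where essentially all of the work lies. Saying that ``$\eta\notin\Ext_{C_*}(K_{1,s+1},F_k,G)$ forces a Zarankiewicz-type sparseness'' and then invoking \cref{lem:drc,lem:kst} is not an argument: you have not specified any bipartite graph to which these lemmas apply, nor any parameters, nor shown how the output matches the claimed bound $\tfrac{1}{c}\inj(F_k^-,G)d^k+\tfrac{1}{c}nd^{e(F_k)}$. In the paper the argument proceeds quite differently. One fixes the restriction $\sigma$ of $\eta$ to $R(F_k)\setminus\{v_1,\dots,v_k\}$ and builds a bipartite graph $\widetilde H_\sigma$ between the set $\widetilde U_\sigma$ of $v_0$-images and the set $\widetilde W_\sigma$ of $k$-tuples $(\eta(v_1),\dots,\eta(v_k))$. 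The non-extension condition gives $|\widetilde A_\sigma|\le C_*^{t-k}|\widetilde H_\sigma|+|\widetilde I_\sigma^\times|$. One then \emph{randomly samples} a subset $U_\sigma\subset\widetilde U_\sigma$ of size $m_0=\lfloor n^{(s-(s+1)\alpha)k}\rfloor$ and splits the induced bipartite graph by the degree of $\vec w$: the low-degree part is controlled by the sequential sunflower lemma (producing the terms $d^{s(i-k)}\amp_{C_i}(F_i,G)$), while the high-degree part is controlled by an \emph{iterated} application of \cref{lem:drc} to the bipartite graphs between $U_\sigma$ and successive common neighborhoods. Condition~\cref{eqn:k-cond} enters precisely as the inequality $m_0^{(sk-1)(1-(k-1)/(s+1))}\le n^\alpha$, which is what makes the iterated DRC bound small enough; after averaging over the random $U_\sigma$ and summing over $\sigma$, one recovers the stated inequality.

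Two further points. First, your placement of the sunflower lemma (``to disjointify families of promotions sharing too many new leaves'') is not where it is actually needed: the double count $|A_i|\,d^{s(k-i)}\le\binom{t-i}{k-i}\amp_{C_i}(F_i,G)$ requires no sunflower at all, whereas in the paper the sunflower argument is applied to the sequences $\eta'(\vec v)$ in the low-degree part to show that if none of them form a large sequential sunflower with any kernel $I$, then there are only $O(1)$ of them per $\sigma'$ --- this is what converts the absence of ample $F_i$-promotions (for each $i$) into an absolute bound. Second, your partition by ``smallest $i$ with at least $d^{s(k-i)}$ ample promotions'' does not align with the paper's high/low degree split on $\widetilde W_\sigma$, and it is not clear that ``few ample promotions at every level'' yields any usable structural information on $\eta$ by itself; the random subsampling of $\widetilde U_\sigma$ is the missing mechanism that creates a graph small enough for the Zarankiewicz/DRC bounds to bite.
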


\begin{proof}
  As we mostly deal with embeddings to $G$, we omit $G$ in $\Inj(\cdot, G)$, $\Amp(\cdot, G)$, $\Ext(\cdot, \cdot, G)$ and their relativized versions.

  Let $v_0, v_1, \dots, v_k$ be defined for $F_0, \dots, F_k$ as in \cref{fig:3-trees}, and let $S_i$ be the set of roots which are adjacent to $v_i$ for $i \in [k]$. We view $F_i$ as a subtree of $F_{i-1}$ induced on $V(F_{i-1})\setminus S_i$. Let $\sigma$ denote an arbitrary injection from $R(F_k) \setminus \set{v_1, \dots, v_k}$ to $V(G)$, and set
  $$
    \wtas = \Amp_{C_k}(F_k; \sigma) \quad\text{and}\quad \wist = \Ext_{C_*}(K_{1,s+1}, F_k) \cap \Inj(F_k; \sigma).
  $$

  For short, denote $\vec v := (v_1, \dots, v_k)$ and $\eta(\vec v) := (\eta(v_1), \dots, \eta(v_k))$ for every $\eta \in \Inj(F_k)$. Let $\wths$ be the bipartite graph with two parts
  $$
    \wtus = \dset{\eta(v_0)}{\eta \in \wtas} \quad \text{and} \quad \wtws = \dset{\eta(\vec v)}{\eta \in \wtas}
  $$
  whose edge set is given by
  $$
    \wths = \dset{(\eta(v_0), \eta(\vec v))}{\eta \in \wtas}.
  $$

  \setcounter{claim}{0}
  \begin{claim} \label{claim:new-1}
    The size of $\wtas$ is bounded by that of $\wths$ as follows:
    $$
      \abs{\wtas} - \abs{\widetilde{I}^\times_\sigma} \lesssim \abs{\wths}.
    $$
  \end{claim}
  \begin{claimproof}[Proof of \cref{claim:new-1}]
    In view of the definition of $\widetilde{I}^\times_\sigma$, $\wtas \setminus \widetilde{I}^\times_\sigma$ contains no extension of any $C_*$-ample embedding from $K_{1,s+1}$ to $G$. Therefore for every edge $(u, \vec w)$ in $\wths$, there are at most $C_*^{t-k}$ many $\eta \in \wtas \setminus \widetilde{I}^\times_\sigma$ with $(\eta(v_0), \eta(\vec v)) = (u,\vec w)$.
  \end{claimproof}

  Sample a subset $\us$ of $\wtus$ of size $m_0$ chosen uniformly at random, where $m_0$ will be chosen later. We denote $\hs$ the bipartite subgraph $\hs$ of $\wths$ induced on $\us \cup \wtws$, and we partition $\hs$ into $\hs^-$ and $\hs^+$, where $\hs^-$ consists of edges $(u, \vec w)$ in $\hs$ such that $\vec w$ has degree at most $sk$ in $\hs$, and $\hs^+$ is the complement of $\hs^-$ in $\hs$. We estimate the number of edges in $\hs^-$ and $\hs^+$ in the following two claims respectively.

  \begin{claim} \label{claim:new-2}
    For every $\sigma$, the number of edges in $\hs^-$ satisfies
    $$
      \abs{\hs^-}d^{sk} \lesssim n^{sk} + \sum_{i=1}^{k-1}\abs*{\dset{\eta \in \Amp_{C_i}(F_i; \sigma)}{\eta(v_0) \in U_\sigma}}d^{si}.
    $$
  \end{claim}
  \begin{claimproof}[Proof of \cref{claim:new-2}]
    We define a subst $B_\sigma$ of $\Inj(F_0; \sigma)$ as follows. For every edge $(u, \vec w)$ in $\hs^-$, we choose some $\eta \in \wtas$ with $(u, \vec w) = (\eta(v_0), \eta(\vec v))$, and then this chosen $\eta$ gives rise to $(1-o(1))d^{sk}$ many $\eta' \in \Inj(F_0; \sigma)$ such that $\eta' \supseteq \eta$ and $(u, \vec w) = (\eta'(v_0), \eta'(\vec v))$. Finally, we collect these $\eta'$ in $B_\sigma$.
    
    Note that
    \begin{equation} \label{eqn:dsk}
      (1-o(1))\abs{\hs^-}d^{sk} \le \abs{B_\sigma},
    \end{equation}
    and $B_\sigma$ has the distinctness property in the sense that
    \begin{equation} \label{eqn:distinctness}
      \text{no two distinct embeddings in $B_\sigma$ are identical on }\set{v_0, v_1, \dots, v_k} \cup S_1 \cup \dots \cup S_k.
    \end{equation}
      
    Let $\sigma'$ denote an arbitrary injection from $R(F_0)$ to $V(G)$ such that $\sigma' \supseteq \sigma$, and define $B_{\sigma'} = B_\sigma \cap \Inj(F_0; \sigma')$. We claim that, for every $I \subsetneq [k]$, the cardinality of
    \begin{multline*}
      B_{\sigma'}^I := \dset{\eta' \in B_{\sigma'}}{\text{there exist }\eta' = \eta'_1, \eta'_2, \dots, \eta'_{C_i} \in B_{\sigma'} \text{ such that } \\ \eta_1'(\vec v), \dots, \eta_{C_i}'(\vec v) \text{ form a sequential sunflower of size }C_i \text{ whose kernel is }I}.
    \end{multline*} satisfies
    \begin{equation} \label{eqn:isigma}
      \sum_{\sigma'}\abs{B_{\sigma'}^I} \le \abs{\dset{\eta \in \Amp_{C_i}(F_i; \sigma)}{\eta(v_0) \in U_\sigma}}(Kd)^{si}, \quad\text{where }i = \abs{I}.
    \end{equation}
    
    Without loss of generality, we may assume that $I = [k] \setminus [k-i]$ for some $i \in \set{0, \dots, k-1}$. Clearly $\sum_{\sigma'}\abs{B_{\sigma'}^I} = \abs{\bigcup_{\sigma'}B_{\sigma'}^I}$. For every $\sigma'$ and $\eta' \in B_{\sigma'}^I \subseteq \Inj(F_0;\sigma)$, we claim that the restriction, say $\eta \in \Inj(F_i;\sigma)$, of $\eta'$ to $V(F_i)$ is in $\Amp_{C_i}(F_i;\sigma)$. Assuming the claim, as there are at most $(Kd)^{si}$ ways to extend $\eta \in \Amp_{C_i}(F_i;\sigma)$ to an embedding from $F_0$ to $G$, \cref{eqn:isigma} follows immediately.
    
    To see that $\eta$ is $C_i$-ample, by the definition of $B_{\sigma'}^I$, there exist $\eta' = \eta'_1, \eta'_2, \dots, \eta'_{C_i} \in B_{\sigma'}$ such that $\eta_1'(\vec v), \dots, \eta_{C_i}'(\vec v)$ form a sequential sunflower of size $C_i$ with kernel $I = [k] \setminus [k-i]$. Unwinding the definition of a sequential sunflower, for two distinct $j_1, j_2 \in [C_i]$, we know that $\eta'_{j_1}$ and $\eta'_{j_2}$ are identical on $\set{v_{k-i+1}, \dots, v_{k}}$, but $\set{\eta'_{j_1}(v_1), \dots, \eta'_{j_1}(v_{k-i})}$ and $\set{\eta'_{j_2}(v_1), \dots, \eta'_{j_2}(v_{k-i})}$ are disjoint. For every $j \in [C_2]$, since $\eta_j' \in B_{\sigma'} \subseteq B_{\sigma}$, we know according to our choice of $B_\sigma$ that the restriction of $\eta_j$ to $V(F_k)$ is a $C_k$-ample embedding from $F_k$ to $G$. Thus, using the assumption that $C_k \ge C_iv(F_i)$, we can greedily modify $\eta_1', \dots, \eta_{C_i}'$ one at a time so that the images of $V(F_k) \setminus R(F_k)$ under $\eta_1', \dots, \eta_{C_i}'$ are pairwise disjoint. One can now easily check that the restrictions, say $\eta = \eta_1, \dots, \eta_{C_i}$, of $\eta'_1, \dots, \eta_{C_i}'$ to $V(F_i)$ satisfy that they are identical on $R(F_i)$, and the images of $V(F_i)\setminus R(F_i)$ under $\eta_1, \dots, \eta_{C_i}$ are pairwise disjoint.

    Finally we estimate the cardinality of
    $$  
      B_{\sigma'}^\times := B_{\sigma'} \setminus \bigcup_{I \subsetneq [k]} B_{\sigma'}^I.
    $$
    Set $W = \dset{\eta'(\vec v)}{\eta' \in B_{\sigma'}^\times}$.
    For every sequence $\vec w \in W$, as the degree of $\vec w$ is at most $sk$ in $\hs$, together with the distinctness property \cref{eqn:distinctness} of $B_\sigma \supseteq B_{\sigma'}$, we know that $\abs{B_{\sigma'}^\times} \le sk\abs{W}$. By the definitions of $B_{\sigma'}^\times$ and $B_{\sigma'}^I$, one can check that $W$ contains no sequential sunflower of size $\max(C_0, \dots, C_{k-1}) = pv(F_0)^{k-1}$. Thus \cref{lem:sunflower} implies $\abs{B_{\sigma'}^\times} \le sk\abs{W} \le K_0$ for some positive constant $K_0 = K_0(s,t,p)$, and so
    $$
      \abs{B_{\sigma'}} \le K_0 + \sum_{I\subsetneq [k]}\abs{B_{\sigma'}^I}.
    $$
    Because the total number of $\sigma'\colon R(F_0) \to V(G)$ such that $\sigma' \supseteq \sigma$ is at most $n^{sk}$, summing the last inequality over all $\sigma'$, together with \cref{eqn:dsk}, yields
    $$
      (1-o(1))\abs{\hs^-}d^{sk} \le \abs{B_\sigma} \le K_0n^{sk} + \sum_{\sigma'}\sum_{I\subsetneq [k]}\abs{B_{\sigma'}^I},
    $$
    which implies the desired inequality in view of \cref{eqn:isigma} and the assumption that $\amp_{C_0}(F_0) = 0$.
  \end{claimproof}

  \begin{claim} \label{claim:new-3}
    For every $\sigma$, the number of edges in $\hs^+$ satisfies
    $$
      \abs{\hs^+} \lesssim \begin{cases}
        m_0n^{1-1/s} & \text{if } k = 1; \\
        m_0^{(sk-1)\left(1-\frac{k-1}{s+1}\right)+1}d^{k-1} + m_0^{sk+\frac{s(k-1)}{s+1}} & \text{otherwise}.
      \end{cases}
    $$
  \end{claim}

  \begin{claimproof} [Proof of \cref{claim:new-3}]
    Let $U_0$ denote an arbitrary vertex subset of $U_\sigma$ of size $sk$ in $\hs$, and denote $N_{\hs^+}(U_0) \subseteq \wtws$ the common neighborhood of $U_0$ in $\hs^+$. Let $W(U_0)$ be the $k$-uniform hypergraph defined by
    $$
      W(U_0) = \dset{\set{w_1, \dots, w_k}}{(w_1, \dots, w_k) \in N_{\hs^+}(U_0)}.
    $$
    
    Observe that $W(U_0)$ contains no matching of size $C_0 = p$. Indeed, suppose on the contrary that $W(U_0)$ contains a matching $e_1, \dots, e_p$ of size $p$. We can find a $p$-ample embedding from $F_0$ to $G$ as follows, which would contradict with the assumption $\amp_{C_0}(F_0) = 0$. Since $e_1 \in W(U_0)$, we know that $e_1$ is the image of $\set{v_1, \dots, v_k}$ under some embedding in $\Amp_{C_k}(F_k; \sigma)$. Because $C_k > sk$, we can find some $\eta_1 \in \Amp_{C_k}(F_k; \sigma)$ with $\set{\eta_1(v_1), \dots, \eta_1(v_k)} = e_1$ such that $\eta_1(V(F_k)\setminus R(F_k))$ does not intersect $U_0$. We then extend $\eta_1$ to $\eta_1' \in \Inj(F_0)$ by mapping $S_1 \cup \dots \cup S_k$ to $U_0$ additionally.
    
    To see that $\eta_1'$ is in fact $p$-ample, we greedily build a sequence of embeddings $\eta_1', \dots, \eta_p'$ in $\Inj(F_0)$ such that they are identical on $R(F_0)$, and the images of $V(F_0)\setminus R(F_0)$ under $\eta_1', \dots, \eta_p'$ are pairwise disjoint. Suppose we have built $\eta_1', \dots, \eta_j'$ for some $j < p$. Similar to how we found $\eta_1'$, because $C_k > sk + jv(F_0)$, we can find some $\eta_{j+1} \in \Amp_{C_k}(F_k; \sigma)$ with $\set{\eta_{j+1}(v_1), \dots, \eta_{j+1}(v_k)} = e_{j+1}$ such that $\eta_1(V(F_k)\setminus R(F_k))$ does not intersect $U_0 \cup \bigcup_{i \le j}\eta'_i(V(F_0))$. We extend $\eta_j$ to $\eta_j'$ by mapping $S_1 \cup \dots \cup S_k$ to $U_0$ the same way as $\eta_1'$.
    
    Now we treat the $k = 1$ case and the $k \ge 2$ case separately.

    \paragraph{Case 1: $k = 1$.}
    In this case, $W(U_0)$ is a $1$-uniform hypergraph, and it contains less than $p$ vertices for every $U_0$. Therefore $\hs^+$ contains no complete bipartite subgraph with $s$ vertices in $\us$ and $p$ vertices in $\wtws$. \cref{lem:kst} shows that $\abs{\hs^+} \lesssim \abs{\us}\abs{\wtws}^{1-1/s}$, which implies the desired inequality in view of the fact that $\abs{\wtws} \le n$.

    \paragraph{Case 2: $k \ge 2$.}
    Using the assumption that $d_{\hs^+}(\vec w) > sk$ for every $\vec w \in \wtws$, a simple double counting argument shows that
    $$
      \abs{\hs^+} = \sum_{\vec w \in \wtws} d_{\hs^+}(\vec w) \le \sum_{\vec w \in \wtws} \binom{d_{\hs^+}(\vec w)}{sk} = \sum_{U_0}\abs{N_{\hs^+}(U_0)},
    $$
    which, together with the fact that $\abs{N_{\hs^+}(U_0)} \le k!\abs{W(U_0)}$, implies that
    $$
      \abs{\hs^+} \lesssim \sum_{U_0}\abs{W(U_0)}.
    $$

    For convenience, denote $N(U_0)$ the vertex set of the $k$-uniform hypergraph $W(U_0)$. As $W(U_0)$ contains no matching of size $p$, clearly we have $\abs{W(U_0)} \le k(p - 1)\abs{N(U_0)}^{k-1}$, and so
    $$
      \abs{\hs^+} \lesssim \sum_{U_0}\abs{N(U_0)}^{k-1}.
    $$

    It suffices to estimate $\sum_{U_0}\abs{N(U_0)}^{k-1}$. Clearly $\abs{N(U_0)} \le Kd$, and so $\sum_{U_0} \abs{N(U_0)}^{k-1} \le m_0^{sk}(Kd)^{k-1}$, which gives the following simple bound on $\abs{\hs^+}$,
    \begin{equation} \label{eqn:hs-weak}
      \abs{\hs^+} \lesssim m_0^{sk}d^{k-1}.
    \end{equation}  
    To get a better estimate on $\sum_{U_0}\abs{N(U_0)}^{k-1}$, we squeeze a bit more out of the assumption that $G$ contains no $F_0^p$ as a subgraph by iteratively applying \cref{lem:drc}.\footnote{Had we used the simple bound \cref{eqn:hs-weak} on $\abs{\hs^+}$, we could have removed the rest proof of \cref{claim:new-3} together with \cref{lem:drc}. The tradeoff that comes with this simplification is a condition on $t$ that is more restricted than \cref{eqn:k-cond}.}
    
    Let $V(\wtws) \subseteq V(G)$ be the set of vertices that ever appear in any sequence in $\wtws$. For every subset of $U \subseteq U_\sigma$, we denote $N'_G(U)$ the set of vertices in $V(\wtws)$ that are adjacent to every vertex in $U$ in the graph $G$. We prove inductively for $1 \le i \le sk$ that
    \begin{equation} \label{eqn:nhbr}
      \sum_{U \subseteq U_\sigma \colon \abs{U} = i}\abs{N_G'(U)}^{k-1} \lesssim m_0^{(i-1)\left(1-\frac{k-1}{s+1}\right)+1} d^{k-1} + m_0^{i+\frac{s(k-1)}{s+1}}.
    \end{equation}
    Notice that $N(U_0) \subseteq N'_G(U_0)$ for every $U_0$. In particular, taking $i = sk$ in \cref{eqn:nhbr} gives
    $$
      \sum_{U_0} \abs{N(U_0)}^{k-1} \lesssim m_0^{(sk-1)\left(1-\frac{k-1}{s+1}\right)+1}d^{k-1} + m_0^{sk+\frac{s(k-1)}{s+1}},
    $$
    which implies the desired inequality in \cref{claim:new-3}.
    
    The base case $i = 1$ is evident as the maximum degree of $G$ is at most $Kd$. For the inductive step, consider an arbitrary $U \subseteq U_\sigma$ of size $i - 1$ and denote $u$ an arbitrary vertex in $U_\sigma \setminus U$. Clearly $\abs{N_G'(U \cup \set{u})} = d_{G(U)}(u)$, where $G(U)$ is the bipartite subgraph of $G$ induced on $U_\sigma$ and $N_G'(U)$. Observe that there is no embedding $\eta \in \Inj(T_{s,t,0}, G(U))$ such that $\eta(R(T_{s,t,0}^p)) \subseteq U_\sigma$, because otherwise one can extend $\eta \in \Inj(T_{s,t,0}, G(U))$ to $\eta' \in \Inj(F_0^p)$ such that $\eta'$ and $\sigma$ are identical on $S'(F_0^p) = S'(F_0)$ (see \cref{fig:vertex-subsets} for the definitions of $S'(F_0)$ and $Q(F_0)$). As every vertex in $Q(T_{s,t,0}^p)$ has degree $s+1$, and $\abs{U_\sigma} = m_0$, \cref{lem:drc} shows that
    $$
      \sum_{u \in U_\sigma\setminus U}\abs{N_G'(U \cup \set{u})}^{k-1} = \sum_{u \in U_\sigma \setminus U}d_{G(U)}(u)^{k-1} \lesssim \left(m_0^{k-1} + \abs{N_G'(U)}^{k-1}\right)m_0^{1-\frac{k-1}{s+1}}.
    $$
    Let $U'$ denote an arbitrary subset of $U_\sigma$ of size $i$.
    Summing the above inequality over all $U \subseteq U_\sigma$ of size $i-1$, we obtain from the inductive hypothesis that
    \begin{align*}
      \sum_{U'}\abs{N_G'(U')}^{k-1} & \lesssim \sum_{U} \left( m_0^{1+\frac{s(k-1)}{s+1}} + m_0^{1-\frac{k-1}{s+1}}\abs{N_G'(U)}^{k-1}\right) \\
      & \lesssim m_0^{i+\frac{s(k-1)}{s+1}} + m_0^{1-\frac{k-1}{s+1}}\sum_{U}\abs{N_G'(U)}^{k-1} \\
      & \lesssim m_0^{i+\frac{s(k-1)}{s+1}} + m_0^{(i-1)\left(1-\frac{k-1}{s+1}\right)+1}d^{k-1} + m_0^{i + \frac{(s-1)(k-1)}{s+1}},
    \end{align*}
    which becomes \cref{eqn:nhbr} after noticing that $m_0^{i+\frac{s(k-1)}{s+1}}$ subsumes $m_0^{i + \frac{(s-1)(k-1)}{s+1}}$.
  \end{claimproof}

  Before we assemble \cref{claim:new-1,claim:new-2,claim:new-3} together, we observe that
  \begin{equation} \label{eqn:wtus}
    \abs{\wtus} \le \inj(F_k^-;\sigma).
  \end{equation}
  Indeed, since every $u \in \wtus$ corresponds to $\eta \in \Inj(F_k^-; \sigma)$ such that $\eta(v_0) = u$, where $F_k^-$ is the subgraph of $F_k$ induced on $V(F_k) \setminus \set{v_1, \dots, v_k}$, clearly we have \cref{eqn:wtus}.
  Like in the proof of \cref{claim:new-3}, we treat the $k = 1$ case and the $k \ge 2$ case separately.
  
  \paragraph{Case 1: $k = 1$.} We simply take $m_0 = \abs{\wtus}$, in other words, $U_\sigma = \wtus$. Notice that every vertex $\vec w \in \wtws$ has degree at least $C_k$ in $\wths$ because $\vec w = \eta(\vec v)$ for some $C_k$-ample $\eta$ from $F_k$ to $G$. Therefore $\wths = \hs^+$. By \cref{claim:new-1,claim:new-3} and the assumption $1 - 1/s \le \alpha$, we obtain for every $\sigma$ that
  $$
    \abs{\wtas} - \abs{\wist} \lesssim \abs{\wths} = \abs{\hs^+} \lesssim \abs{\wtus}n^{1-1/s}.
  $$
  Using the assumption $1-1/s \le \alpha$, we estimate that $n^{1-1/s} \le n^\alpha = \tfrac{1}{c}d^k$. Therefore, in view of \cref{eqn:wtus}, we have $\abs{\wtas} - \abs{\wist} \lesssim \tfrac{1}{c}\inj(F_k^-; \sigma)d^k$. Summing over all injections $\sigma\colon R(F_k) \setminus \set{v_1, \dots, v_k} \to V(G)$ yields
  $$
    \amp_{C_k}(F_k) - \ext_{C_*}(K_{1,s+1},F_k) \lesssim \tfrac{1}{c} \inj(F_k^-)d^k.
  $$
  
  \paragraph{Case 2: $k \ge 2$.}
  We take $m_0 = \floor{n^{(s - (s+1)\alpha)k}}$. As $s' < s$, one can check that $\rho_{F_0} < s+1$, and so $\alpha < s/(s+1)$, which implies $s - (s+1)\alpha > 0$. Hence $m_0 = \omega(1)$. We claim that the condition \cref{eqn:k-cond} on $t$ implies that
  \begin{equation} \label{eqn:m0-n}
    m_0^{(sk-1)\left(1-\frac{k-1}{s+1}\right)} \le n^\alpha \quad\text{and}\quad m_0^{sk+\frac{s(k-1)}{s+1}} \le m_0n^\alpha d^{k-1}.
  \end{equation}
  Indeed, using $\alpha = 1 - 1/\rho_{F_0} = (st+s'-1)/(st+t+s')$, one can check that \cref{eqn:k-cond} is equivalent to
  $$
    (s-(s+1)\alpha)k(sk-1)\left(1-\tfrac{k-1}{s+1}\right) \le \alpha,
  $$
  which implies the first inequality in \cref{eqn:m0-n}. To see that the second inequality follows from the first inequality in \cref{eqn:m0-n}, in view of the fact that $n^\alpha d^{k-1} \ge n^{k\alpha}$ as $d = cn^\alpha$ and $c > 1$, it suffices to check that $(sk-1)(1-(k-1)/(s+1)) \ge (sk+s(k-1)/(s+1)-1)/k$, which is equivalent to $sk(k - 1)(s-k + 1) + (k-1)^2 \ge 0$, which clearly holds.
  
  Using \cref{eqn:m0-n}, we can simplify \cref{claim:new-3} to $\abs{\hs^+} \lesssim m_0n^\alpha d^{k-1}$. Combining this with \cref{claim:new-2}, we obtain for every $\sigma$ that
  $$
    \abs{\hs} = \abs{\hs^-} + \abs{\hs^+}
    \lesssim n^{sk}d^{-sk} + m_0n^\alpha d^{k-1} + \sum_{i=1}^{k-1} \abs{\dset{\eta \in \Amp_{C_i}(F_i; \sigma)}{\eta(v_0) \in U_\sigma}}d^{s(i-k)}.
  $$
  Recall that $U_\sigma$ is a subset of $\wtus$ of size $m_0$ chosen uniformly at random, and $\hs$ is the bipartite subgraph of $\wths$ induced on $U_\sigma \cup \wtws$. Observe that the expectation of $\abs{\hs}$ is $m_0\abs{\wths}/\abs{\wtus}$, and the expectation of $\abs{\dset{\eta \in \Amp_{C_i}(F_i; \sigma)}{\eta(v_0) \in U_\sigma}}$ is $m_0\amp_{C_i}(F_i;\sigma)/\abs{\wtus}$. Thus taking the expectation of the above inequality, and then multiplying both sides by $\abs{\wtus}/m_0$, gives
  \begin{equation} \label{eqn:ineq-new-1}
    \abs{\wths} \lesssim \abs{\wtus}\left(m_0^{-1}n^{sk}d^{-sk} + n^\alpha d^{k-1}\right) + \sum_{i=1}^{k-1} \amp_{C_i}(F_i;\sigma)d^{s(i-k)}.    
  \end{equation}
  In case $\abs{\wtus} < m_0$, we could not take $\us \subseteq \wtus$ of size $m_0$, and we instead simply bound
  \begin{equation} \label{eqn:ineq-new-2}
    \abs{\wths} \le m_0(Kd)^k \lesssim m_0d^k.
  \end{equation}

  Using $d = cn^\alpha$, one can routinely check that $m_0^{-1}n^{sk}d^{-sk}$ and $n^\alpha d^{k-1}$ in \cref{eqn:ineq-new-1} are at most $(1+o(1))d^k/c$, and $m_0d^k$ in \cref{eqn:ineq-new-2} is at most $n^{1-(\abs{R(F_k)-k})}d^{e(F_k)}/c$. Therefore \cref{claim:new-1}, \cref{eqn:wtus,eqn:ineq-new-1,eqn:ineq-new-2} imply, regardless of whether $\abs{\wtus} \ge m_0$, that
  $$
    \abs{\wtas} - \abs{\wist} \lesssim \abs{\wths} \lesssim \tfrac{1}{c}\inj(F_k^-;\sigma)d^{k} + \tfrac{1}{c}n^{1-(\abs{R(F_k)}-k)}d^{e(F_k)} + \sum_{i=1}^{k-1}\amp_{C_i}(F_i;\sigma)d^{s(i-k)}.
  $$
  Summing over all injections $\sigma\colon R(F_k) \setminus \set{v_1, \dots, v_k} \to V(G)$ yields that
  $$
    \amp_{C_k}(F_k) - \ext_{C_*}(K_{1,s+1}, F_k) \lesssim
    \tfrac{1}{c}\inj(F_k^-)d^{k} + \tfrac{1}{c}nd^{e(F_k)} + \sum_{i=1}^{k-1}\amp_{C_i}(F_i)d^{s(i-k)}.
  $$
  This completes the proof of \cref{lem:ample-tree}.
\end{proof}

\begin{proof}[Proof of \cref{thm:neg}\ref{item:neg-2}]
  Assume that $s - s' \ge 1$. Denote $F_k = T_{s,t-k,s'+k}$ for $0 \le k \le s - s'$. In particular, $F_0 = T_{s,t,s'}$. Let $p \in \N^+$ and $C_* = v(T_{s,t,s+1}^p)$. Set $C_k = p v(F_0)^i$ for $k \le s-s'$. Let $c_0 \ge 1 / \eps$ be the constant to be chosen. Suppose that $c > c_0$ and $G$ is an $n$-vertex graph with $n \ge n_0(c)$ such that every vertex in $G$ has degree between $d$ and $Kd$, where $d = cn^\alpha$ and $K = 5^{4/\alpha}$, and moreover $\amp_{C_0}(F_0, G) = 0$. We break the rest of the proof into two cases.

  \paragraph{Case 1:} $k = 1$. Let $c_0$ be at least the constant already obtained from \cref{thm:neg}\ref{item:neg-1}. By the choice of $c_0$, we know that $\amp_{C_*}(K_{1,s+1}, G) \le \eps nd^{e(K_{1,s+1})}$. Since $F_0$ is balanced, by \cref{lem:balance-condition}, $\rho_{F_0} \ge s$, which implies that $1 - 1/s \le \alpha$, where $\alpha = 1 - 1/\rho_{F_0}$. By \cref{lem:ample-tree} and the assumption $1/c < \eps$, we obtain
  $$
    \amp_{C_k}(F_k, G) \lesssim \eps \inj(F_k^-, G)d^k + \eps nd^{e(F_k)} + \ext_{C_*}(K_{1,s+1}, F_k, G).
  $$
  Since $\inj(F_k^-, G) \le n(Kd)^{e(F_k^-)} \lesssim nd^{e(F_k)-k}$, and moreover
  $$
    \ext_{C_*}(K_{1,s+1}, F_k, G) \le \inj(K_{1,s+1}, F_k)\amp_{C_*}(K_{1,s+1}, G)(Kd)^{e(F_k) - e(K_{1,s+1})} \lesssim \eps nd^{e(F_k)},
  $$
  we estimate $\amp_{C_k}(F_k, G) \lesssim \eps nd^{e(F_k)}$, which implies the desired inequality in \cref{thm:neg}\ref{item:neg-2}.

  \paragraph{Case 2:} $2 \le k \le s - s'$. By induction, there exists $c_0 \ge 1/\eps$ such that $\amp_{C_i}(F_i, G) \le \eps nd^{e(F_i)}$ for every $1 \le i < k$. Note that the assumption \cref{eqn:t-cond} in \cref{thm:neg} ensures the condition \cref{eqn:k-cond} in \cref{lem:ample-tree}. By \cref{lem:ample-tree} and the assumption $1/c < \eps$, we similarly obtain that $\amp_{C_k}(F_k, G) \lesssim \eps nd^{e(F_k)}$.
\end{proof}

\section*{Acknowledgements}
We are grateful to Boris Bukh and Ryan Alweiss for comments on the earlier versions of this paper. We are thankful to the anonymous referees for their astute and useful comments.

\bibliographystyle{plain}
\bibliography{turan_exponent}

\end{document}